\def\@seccntDot{.}
\def\@seccntformat#1{\csname the#1\endcsname\@seccntDot\hskip 0.5em}
\renewcommand\section{\@startsection{section}{1}{\z@}%
{18\p@ \@plus 6\p@ \@minus 3\p@}%
{9\p@ \@plus 6\p@ \@minus 3\p@}%
{\large\bfseries\boldmath}}
\renewcommand\subsection{\@startsection{subsection}{2}{\z@}%
{12\p@ \@plus 6\p@ \@minus 3\p@}%
{3\p@ \@plus 6\p@ \@minus 3\p@}%
{\bfseries\boldmath}}
\renewcommand\subsubsection{\@startsection{subsubsection}{3}{\z@}%
{12\p@ \@plus 6\p@ \@minus 3\p@}%
{\p@}%
{\bfseries\boldmath}}
\theoremstyle{plain}
\newtheorem{theorem}{Theorem}[section]
\newtheorem{lemma}{Lemma}[section]
\newtheorem{corollary}{Corollary}[section]
\newtheorem{proposition}{Proposition}[section]
\newtheorem{conjecture}{Conjecture}[section]
\theoremstyle{definition}
\numberwithin{equation}{section}
\title{On the Spread of Graph-Related Matrices\thanks{Supported by National Natural 
Science Foundation of China (12571360, 12331012, 12471320), Excellent University 
Research and Innovation Team in Anhui Province (2024AH010002), and 
Anhui Provincial Natural Science Foundation for Excellent Young Scholars (2408085Y003).}}
\author{Lele Liu, \quad Yi-Zheng Fan, \quad Yi Wang\thanks{Corresponding author: \texttt{wangy@ahu.edu.cn}}, \quad Wenyan Wang \\[2mm]
{\small \it School of Mathematical Sciences, Anhui University, Hefei 230601, P. R. China}}
\date{}
\begin{document}
\maketitle

\begin{abstract}
The spread of a real symmetric matrix is defined as the difference between its largest and smallest eigenvalue. 
The study of graph-related matrices has attracted considerable attention, leading to a substantial body of findings. 
In this paper, we investigate a general spread problem related to $A_{\alpha}$-matrix of graphs. 
The $A_{\alpha}$-matrix of a graph $G$, introduced by Nikiforov in 2017, is a convex combinations of its diagonal degree matrix $D(G)$ and adjacency matrix $A(G)$, 
defined as $A_{\alpha} (G) = \alpha D(G) + (1-\alpha) A(G)$. Let $\lambda_1^{(\alpha)} (G)$ 
and $\lambda_n^{(\alpha)} (G)$ denote the largest and smallest eigenvalues of $A_{\alpha} (G)$, respectively. 
We determined the unique graph that maximizes $\lambda^{(\alpha)}_1 (G) - \beta\cdot\lambda^{(\gamma)}_n (G)$
among all connected $n$-vertex graphs for sufficiently large $n$, where $0 \leq \alpha < 1$, $1/2\leq \gamma < 1$ and $0<\beta\gamma\leq 1$. 
As an application, we confirm a conjecture proposed by Lin, Miao, and Guo [Linear Algebra Appl. 606 (2020) 1--22]. In addition, 
one of main results in [SIAM J. Discrete Math. 38 (2024) 590--608] is a simple corollary of our result by choosing $\alpha = \gamma = 1/2$ and $\beta = 1$.
\par\vspace{2mm}

\noindent{\bfseries Keywords:} $A_{\alpha}$-matrix, $A_{\alpha}$-eigenvalue, spread
\par\vspace{2mm}

\noindent{\bfseries AMS Classification:} 05C35; 05C50; 15A18
\end{abstract}

\section{Introduction}

 
Consider an $n \times n$ complex matrix $M$. The \emph{spread} of $M$ is defined as 
the maximum distance between any two of its eigenvalues. This concept 
was first introduced by Mirsky \cite{Mirsky1956} in 1956, with further significant 
results appearing in \cite{ML}. Over the years, the spread has garnered considerable 
interest from researchers; see, for instance, \cite{Deutsch1978,Johnson1985,NT,TRC}.

The spread of a matrix has also drawn interest in specific cases, particularly in the 
context of graphs. In 2001, Gregory, Hershkowitz, and Kirkland \cite{Gregory-Hershkowitz-Kirkland2001}
initiated the study of the spread of graphs and established several bounds on this quantity.
They also conjectured that among all $n$-vertex graphs, the graph that maximizes the spread 
of its adjacency matrix is $K_{\lfloor2n/3\rfloor} \vee \lceil n/3\rceil K_1$, which 
is the join of the clique on $\lfloor2n/3\rfloor$ vertices and an independent set on 
$\lceil n/3\rceil$ vertices. Subsequently, many scholars contributed to this conjecture 
and related problems of such extremal flavor for a fixed family of graphs, such as 
trees \cite{FXWL}, unicyclic graphs \cite{WS}, bicyclic graphs \cite{WZS}. We would 
like to mention that Breen, Riasanovsky, Tait, and Urschel \cite{BRTJ} confirmed this 
conjecture by using techniques from the theory of graph limits and numerical analysis. For the spread of the signless Laplacian matrix of graphs, commonly referred 
to as the $Q$-spread, Liu et al. \cite{Liu-Liu2010} and Oliveira et al. \cite{Oliveira-Lima-Abreu-Kirkland2010} 
presented some upper and lower bounds. The unique graph with maximum $Q$-spread was 
determined in \cite{Liu-Liu2010} and \cite{Oliveira-Lima-Abreu-Kirkland2010}, which 
is a union of a complete graph and an isolated vertex. Among connected graphs, the 
graph with the minimum $Q$-spread was independently determined by Das \cite{Das2012} 
and Fan and Fallat \cite{Fan-Fallat2012} using different proof techniques. Recently, 
Liu \cite{Liu2024} determined the unique $n$-vertex connected graph attaining the 
maximum $Q$-spread for sufficiently large $n$. This graph, known as the \emph{kite graph} 
$Ki_{n, n-1}$, is obtained by attaching a pendant edge to one vertex of a clique on $n-1$ vertices.

Let us begin with some definitions. Let $G$ be a simple undirected graph of order $n$. 
Nikiforov \cite{Nikiforov2017} introduced the $A_{\alpha}$-matrix of $G$, defined as 
a convex combinations of $D(G)$ and $A(G)$:
\[
A_{\alpha} (G) := \alpha D(G) + (1-\alpha) A(G),
\]
where $D(G)$ is the diagonal matrix whose entries are the degrees of the vertices of $G$,
$A(G)$ is the adjacency matrix of $G$, and $\alpha\in [0,1]$ is a real number. 
Obviously, $A_0(G) = A(G)$, and $2A_{1/2}(G)$ is exactly the signless Laplacian matrix $Q(G)$. 
In this sense, researches on the family $A_{\alpha} (G)$ can underpin a unified theory 
of both $A(G)$ and $Q(G)$. Let $\lambda_1^{(\alpha)} (G)$ and $\lambda_n^{(\alpha)} (G)$ 
denote the largest and smallest eigenvalues of $A_{\alpha} (G)$, respectively. 
Specially, $\lambda_1^{(0)}(G) = \lambda_1(G)$ is the largest eigenvalue of $A(G)$. 
Moreover, $2\lambda_1^{(1/2)}(G) =: q_1(G)$ and $2\lambda_n^{(1/2)}(G) =: q_n(G)$ are the 
largest and smallest eigenvalues of $Q(G)$, respectively.

Generalizing the notion of spread for $A(G)$ and $Q(G)$, Lin, Miao, and Guo \cite{LinMiaoGuo2020-1} 
studied the $A_{\alpha}$-spread $S_{\alpha} (G)$ of a graph $G$, which is defined as 
\[
S_{\alpha} (G) := \lambda_1^{(\alpha)} (G) - \lambda_n^{(\alpha)} (G).
\]
In \cite{LinMiaoGuo2020-1} and \cite{LinMiaoGuo2020-2}, the authors presented some lower and upper 
bounds on $A_{\alpha}$-spread of graphs. In particular, the following conjecture was proposed.

\begin{conjecture}[\cite{LinMiaoGuo2020-2}]\label{conj:alpha-spread}
Let $G$ be a connected graph with $n\geq 5$ vertices. If $1/2 \leq \alpha < 1$, then 
\[
S_{\alpha} (G) \leq S_{\alpha} (Ki_{n, n-1})
\]
with equality if and only if $G\cong Ki_{n, n-1}$.
\end{conjecture}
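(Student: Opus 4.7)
My proposal is to prove \autoref{conj:alpha-spread} by embedding it in the stronger two-parameter extremal result announced in the abstract: for $0\le\alpha<1$, $1/2\le\gamma<1$, $0<\beta\gamma\le 1$, and all sufficiently large $n$, the unique connected $n$-vertex graph maximizing
\[
F_{\alpha,\beta,\gamma}(G) := \lambda_1^{(\alpha)}(G) - \beta\,\lambda_n^{(\gamma)}(G)
\]
is $Ki_{n,n-1}$. Specializing to $\alpha=\gamma$ and $\beta=1$, for which $\beta\gamma=\gamma<1$ holds automatically, recovers the conjecture for all large $n$; the finitely many small cases ($5\le n\le n_0$) can be handled by a direct verification using the explicit spectrum of $A_\alpha(Ki_{n,n-1})$ computed below.

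\textbf{Benchmark for the kite.} I first pin down $F_{\alpha,\beta,\gamma}(Ki_{n,n-1})$ via the equitable partition of $Ki_{n,n-1}$ into the pendant, the apex (its clique neighbor), and the remaining $n-2$ clique vertices. This reduces the $A_\alpha$-spectrum to the $(n-3)$-fold eigenvalue $\alpha(n-1)-1$, arising from vectors supported on the clique-minus-apex and summing to zero, together with the three roots of an explicit $3\times 3$ quotient matrix $B_\alpha$. Asymptotic analysis of the characteristic polynomials of $B_\alpha$ and $B_\gamma$ yields $\lambda_1^{(\alpha)}(Ki_{n,n-1}) = n-1-O(1/n)$ and $\lambda_n^{(\gamma)}(Ki_{n,n-1})\to r(\gamma)$, where $r(\gamma)$ is the smallest root of an explicit limiting cubic and is bounded and positive; hence $F_{\alpha,\beta,\gamma}(Ki_{n,n-1}) = n-1-\beta r(\gamma)+o(1)$.

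\textbf{Structural reduction and case analysis.} For any candidate $G$ with $F_{\alpha,\beta,\gamma}(G) \ge F_{\alpha,\beta,\gamma}(Ki_{n,n-1})$, the fact that $A_\gamma$ is positive semidefinite for $\gamma\ge 1/2$ gives $\lambda_n^{(\gamma)}(G)\ge 0$; combined with Hong- and Das--Kumar-type upper bounds on $\lambda_1^{(\alpha)}$, this forces $\lambda_1^{(\alpha)}(G)\ge n-1-o(1)$. A Nikiforov-style spectral clique bound for $A_\alpha$ then implies that the clique number of $G$ is at least $n-c$ for some constant $c=c(\alpha,\beta,\gamma)$, so $G$ lies in an explicit finite list of near-clique graphs: $K_n$ minus a bounded matching, longer kites $Ki_{n,k}$ with $n-k\le c$, pineapples $PA_{n,s}$ consisting of $K_{n-s}$ with $s$ pendants attached at one vertex for $s\le c$, and $Ki_{n,n-1}$ decorated with a bounded number of extra edges at the pendant. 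For each competitor, an equitable partition gives a small quotient matrix whose characteristic polynomial I compare with those of $B_\alpha$ and $B_\gamma$ at strategic test values (notably $\lambda=\lambda_n^{(\gamma)}(Ki_{n,n-1})$), using interlacing to fix the signs. A final Kelmans-type edge-rotation along the Perron eigenvector of $A_\alpha(G)$ upgrades the weak inequality to strict inequality, yielding uniqueness.

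The main obstacle will be the case analysis, since each competitor must lose in the \emph{coupled} quantity $\lambda_1^{(\alpha)}-\beta\lambda_n^{(\gamma)}$ rather than in either eigenvalue separately. The hypothesis $\beta\gamma\le 1$ is precisely the threshold at which $Ki_{n,n-1}$ beats $PA_{n,s}$ for $s\ge 2$: adding pendants further depresses $\lambda_n^{(\gamma)}$ but also depresses $\lambda_1^{(\alpha)}$, and the weighted trade-off turns decisively in the kite's favor exactly when $\beta\gamma\le 1$. Controlling this balance uniformly in $n$, while keeping explicit track of the $O(1/n)$ error terms from the benchmark, is the technical heart of the argument.
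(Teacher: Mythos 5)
Your top-level strategy---prove the stronger statement that $Ki_{n,n-1}$ uniquely maximizes $\lambda_1^{(\alpha)}(G)-\beta\lambda_n^{(\gamma)}(G)$ and then specialize $\beta=1$, $\gamma=\alpha$---is exactly the paper's, but the route you propose for the stronger statement has a fatal gap, and two of your quantitative claims are wrong. First, the benchmark is off by one: since the clique in $Ki_{n,n-1}$ has $n-1$ vertices, $\lambda_1^{(\alpha)}(Ki_{n,n-1})=n-2+O(1/n)$, not $n-1-O(1/n)$. Consequently a competitor $G$ is only forced to satisfy $\lambda_1^{(\alpha)}(G)>n-2-\beta\gamma\ge n-3$ (this is the paper's Lemma 3.1), not $\lambda_1^{(\alpha)}(G)\ge n-1-o(1)$. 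Second, and more seriously, the structural reduction fails: a spectral clique bound for $A_\alpha$ applied to $\lambda_1^{(\alpha)}(G)\ge n-3$ does \emph{not} yield clique number $n-c$. Nikiforov-type bounds only give $\omega(G)=\Omega(n)$ at this threshold, and the implication is genuinely false---the cocktail-party graph $K_{n\times 2}$ has adjacency spectral radius $n-2$ but clique number $n/2$. (It is eliminated only because its $\lambda_n^{(\gamma)}$ is of order $n$, i.e., you must use the second eigenvalue to constrain the structure, which your reduction never does.) Even granting clique number $n-c$, the competitors do not form a finite list: the $c$ exceptional vertices can attach to subsets of the clique whose sizes grow with $n$, so a case-by-case quotient-matrix comparison cannot be exhaustive.

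The paper's actual mechanism is quite different and avoids any enumeration. It exploits $0\le\lambda_n^{(\gamma)}(G)<2/\beta$ to show that the eigenvector $\bm{z}$ of $\lambda_n^{(\gamma)}$ is concentrated: the set $L$ of vertices with $|z_v|\ge\varepsilon/\sqrt{n}$ has size $o(n)$ (Lemma 3.4). Combined with the edge count $e(G)\ge\binom{n}{2}-O(n)$, this forces all but exactly one vertex $w$ to have degree $n-o(n)$ (Lemmas 3.5 and 3.9, the latter requiring a delicate two-case analysis to rule out two low-degree vertices), and yields entrywise estimates $x_v=\Theta(n^{-1/2})$, $|z_v|=o(n^{-1/2})$ on the high-degree vertices. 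A single edge-perturbation lemma (add $uv$ if $\alpha(x_u-x_v)^2-\beta\gamma(z_u-z_v)^2+2(x_ux_v-\beta z_uz_v)>0$, delete it if negative) then shows the high-degree vertices span a clique and $w$ is a pendant. Finally, note the paper only confirms the conjecture for sufficiently large $n$; your plan to dispatch $5\le n\le n_0$ by direct verification is not realistic, since the threshold produced by this kind of asymptotic argument is non-explicit.
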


In this paper we confirm Conjecture \ref{conj:alpha-spread} for sufficiently large $n$ in a stronger form.

\begin{theorem}\label{thm:alpha-spread}
Let $0 \leq \alpha < 1$, $1/2\leq \gamma < 1$ and $0<\beta\gamma\leq 1$. Among all connected 
graphs on $n$ vertices, $\lambda^{(\alpha)}_1 (G) - \beta\cdot\lambda^{(\gamma)}_n (G)$ is 
maximized only by $Ki_{n,n-1}$ for sufficiently large $n$.
\end{theorem}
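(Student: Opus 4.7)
Write $f(G) := \lambda_1^{(\alpha)}(G) - \beta\lambda_n^{(\gamma)}(G)$. The plan is first to pin down $f(Ki_{n,n-1})$ asymptotically as a benchmark, and then show that every other connected $n$-vertex graph has strictly smaller $f$-value for large $n$. For the benchmark I would exploit the equitable $3$-cell partition of $Ki_{n,n-1}$ into the pendant vertex, the hub of degree $n-1$, and the remaining $n-2$ clique vertices. The spectrum of $A_\alpha(Ki_{n,n-1})$ then splits as three roots of an explicit $3\times 3$ quotient characteristic cubic together with $n-3$ copies of $\alpha(n-1)-1$ arising from vectors orthogonal to all-ones on the large cell, and similarly for $A_\gamma$. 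A short perturbation analysis of these cubics, using that the small root is sandwiched strictly below $\gamma$ by the sign of the cubic at $\lambda=\gamma$, yields
\[
\lambda_1^{(\alpha)}(Ki_{n,n-1}) = n-2 + \Theta(1/n^{2}), \qquad \lambda_n^{(\gamma)}(Ki_{n,n-1}) = \gamma - \Theta(1/n),
\]
both with positive perturbation, so $f(Ki_{n,n-1}) = n - 2 - \beta\gamma + \Theta(1/n)$.

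For any connected competitor $G$ I would apply two universal estimates. Since $\gamma \geq 1/2$, the identity $A_\gamma(G) = (1-\gamma)Q(G) + (2\gamma-1)D(G)$ writes $A_\gamma(G)$ as a sum of positive semidefinite matrices, so $\lambda_n^{(\gamma)}(G) \geq 0$. Since every row sum of $A_\alpha(G)$ equals the corresponding degree, the Perron row-sum bound gives $\lambda_1^{(\alpha)}(G) \leq \Delta(G)$. Together these yield $f(G) \leq \Delta(G)$. The argument then splits on $\Delta(G)$. If $\Delta(G) \leq n-3$, then $f(G) \leq n-3 < f(Ki_{n,n-1})$ for large $n$, using $\beta\gamma \leq 1$. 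If $\Delta(G) = n-2$, a quantitative irregularity bound gives $\lambda_1^{(\alpha)}(G) < n-2$ by a definite positive amount on non-$(n-2)$-regular graphs, leaving only $G = K_n - M$ for a perfect matching $M$; this regular case is dispatched by a direct equitable-partition computation yielding $f(K_n - M) = (1-\beta\gamma)n + 2\beta - 2$, which is strictly below $f(Ki_{n,n-1})$ for large $n$ since $\gamma \geq 1/2$ and $\beta\gamma \leq 1$. The remaining case is $\Delta(G) = n-1$, i.e., $G = v \vee H$ for some graph $H$ on $n-1$ vertices.

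The main obstacle is the dominating-vertex case. From the block decomposition
\[
A_\gamma(v \vee H) = \begin{pmatrix} \gamma(n-1) & (1-\gamma)\mathbf{1}^{T} \\ (1-\gamma)\mathbf{1} & A_\gamma(H) + \gamma I \end{pmatrix},
\]
Cauchy interlacing on the principal submatrix gives $\lambda_n^{(\gamma)}(v \vee H) \leq \lambda_{n-1}^{(\gamma)}(H) + \gamma$, and this bound is minimized over $H$ precisely when $H$ has an isolated vertex (since $A_\gamma(H) \succeq 0$ with an additional null direction exactly then). Edge-monotonicity of $A_\alpha$ further shows that, among such $H$, the densest choice $H = K_{n-2} \cup K_1$ maximizes $\lambda_1^{(\alpha)}(v \vee H)$, and $v \vee (K_{n-2} \cup K_1) = Ki_{n,n-1}$. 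To promote these two separate observations into a strict comparison against every $H \neq K_{n-2} \cup K_1$---including dense $H$ with no isolated vertex---I would expand the cubic determining the top and bottom quotient eigenvalues of $v \vee H$ and perform a local edge-swap analysis, showing that every swap from $H$ toward $K_{n-2} \cup K_1$ (either deleting an edge at a candidate isolated vertex, or restoring a missing clique edge) strictly increases $f$. The hypothesis $\beta\gamma \leq 1$ is precisely the balance condition making a marginal deletion that creates an isolated vertex a net-positive move for $f$; the reverse inequality $\beta\gamma > 1$ would flip this trade-off and single out $K_n$ as the extremum instead.
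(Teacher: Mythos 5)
Your overall strategy (benchmark $Ki_{n,n-1}$, then eliminate competitors by maximum degree) is quite different from the paper's, which instead takes an unknown extremal $G$, derives $\lambda_1^{(\alpha)}(G)>n-2-\beta\gamma$ and a density bound, and then uses detailed estimates on \emph{both} eigenvectors ($x_v=\Theta(n^{-1/2})$ and $|z_v|=o(n^{-1/2})$ for all high-degree vertices, with the mass of $\bm z$ concentrated on a unique low-degree vertex) to force, via a local edge-perturbation criterion, that $G\setminus\{w\}$ is a clique and $d(w)=1$. Unfortunately your route has two genuine gaps, both at the point where competitors come within $O(1/n)$ of the kite.

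First, the case $\Delta(G)=n-2$ does not reduce to $K_n-M$. Take $G_1=Ki_{n,n-1}-uu'$, i.e.\ attach a pendant $w$ to a vertex $u$ of $K_{n-1}$ and delete one clique edge at $u$; then $\Delta(G_1)=n-2$, $G_1$ is far from $(n-2)$-regular, yet $\lambda_1^{(\alpha)}(G_1)\ge \lambda_1^{(\alpha)}(Ki_{n,n-1})-O(1/n)$ and $\lambda_n^{(\gamma)}(G_1)\le \lambda_n^{(\gamma)}(Ki_{n,n-1})+O(1/n^2)$ (test the kite's eigenvectors), so $f(G_1)\ge f(Ki_{n,n-1})-O(1/n)$. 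No ``definite positive amount'' of irregularity deficiency, nor the crude bound $f(G)\le\Delta(G)$, can separate such graphs from the kite; the comparison genuinely lives at the $\Theta(1/n)$ scale and needs eigenvector-level information. Second, in the dominating-vertex case the Cauchy interlacing step points the wrong way: it gives an \emph{upper} bound on $\lambda_n^{(\gamma)}(v\vee H)$, whereas to show a competitor has smaller $f$ you must bound $\lambda_n^{(\gamma)}$ from \emph{below} (or $\lambda_1^{(\alpha)}$ from above). Likewise, the assertion that ``every swap toward $K_{n-2}\cup K_1$ strictly increases $f$'' is exactly the crux of the theorem, not a routine computation: a single edge deletion/addition changes $\lambda_1^{(\alpha)}$ and $\lambda_n^{(\gamma)}$ by amounts controlled only one-sidedly by Rayleigh quotients of the respective eigenvectors, and without first proving where those eigenvectors concentrate (the content of the paper's Lemmas on $|L|=o(n)$, the unique $o(n)$-degree vertex, and $z_w^2=1-o(1)$) the sign of the net change in $f$ is not determined. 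There is also no ``cubic'' for general $H$, since $v\vee H$ has no three-cell equitable partition. As written, the proposal establishes the easy case $\Delta\le n-3$ and the benchmark asymptotics, but the two hard cases remain open.
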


Theorem \ref{thm:alpha-spread} implies that Conjecture \ref{conj:alpha-spread} holds for 
sufficiently large $n$ by letting $\beta = 1$ and $\gamma = \alpha$. Additionally,
Theorem \ref{thm:alpha-spread} also contributes partially to a problem posed by
Breen, Riasanovsky, Tait, and Urschel \cite{BRTJ}, who asked for the maximization of
the function $\beta\lambda_1^{(0)}(G) - (1-\beta) \lambda_n^{(0)}(G)$ over a specified 
class of graphs, where $\beta\in [0,1]$. On the other hand, by choosing 
$\alpha = \gamma = 1/2$ and $\beta = 1$, we immediately derive the following result.

\begin{corollary}[\cite{Liu2024}]
Among all connected graphs on $n$ vertices, $q_1(G) - q_n(G)$ is maximized only 
by $Ki_{n,n-1}$ for sufficiently large $n$.  
\end{corollary}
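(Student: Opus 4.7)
The plan is to (i) compute the asymptotic value of the objective at $Ki_{n,n-1}$ as a benchmark, then (ii) rule out every other connected $n$-vertex graph by a structural argument organized around the maximum degree of $G$. For step (i), I would use the equitable three-cell partition of $Ki_{n,n-1}$ -- the pendant vertex, its unique neighbor, and the remaining $n-2$ clique vertices -- and analyze the resulting $3\times 3$ quotient matrix; the remaining $n-3$ eigenvalues, from vectors supported on the third cell that sum to zero, all equal $\alpha(n-1)-1$ and lie in the bulk of the spectrum. An asymptotic expansion of the quotient's characteristic polynomial gives $\lambda_1^{(\alpha)}(Ki_{n,n-1}) = n - 2 + \alpha/n + O(1/n^2)$. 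For the smallest eigenvalue, the identity $A_\gamma = (2\gamma - 1)D + (1-\gamma)Q$ shows that $A_\gamma$ is positive semidefinite when $\gamma \geq 1/2$, so $\lambda_n^{(\gamma)}(G) \geq 0$ for every graph; the analogous quotient computation yields $\lambda_n^{(\gamma)}(Ki_{n,n-1}) = \gamma - (1-\gamma)^2/(\gamma n) + O(1/n^2)$, and combining these gives the benchmark value $n - 2 - \beta\gamma + c(\alpha,\beta,\gamma)/n + O(1/n^2)$ with a strictly positive leading correction $c(\alpha,\beta,\gamma)$.

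For step (ii), let $G \not\cong Ki_{n,n-1}$ be a connected graph on $n$ vertices. Starting from the well-known bound $\lambda_1^{(\alpha)}(G) \leq \Delta(G)$: if $\Delta(G) \leq n-3$ then the objective is at most $n-3$, strictly below the benchmark for large $n$ thanks to the positive $O(1/n)$ correction, even in the boundary case $\beta\gamma = 1$. The remaining regimes are $\Delta(G) = n-2$, handled either by a quantitative non-regularity gap in the Rayleigh quotient (when $G$ is not $(n-2)$-regular) or by an explicit computation for $G = K_n - M$ which yields objective $(1-\beta\gamma)n + O(1)$, smaller than the benchmark since $\beta\gamma > 0$; and $\Delta(G) = n-1$, i.e., $G = u \vee H$ for a universal vertex $u$ and some graph $H$ on $n-1$ vertices. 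The subcase $H = K_{n-1}$ (so $G = K_n$) gives objective $(1-\beta\gamma)n + O(1)$ and is dismissed analogously.

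The substantive step is then to prove that, among all $G = u \vee H$ with $H \subsetneq K_{n-1}$, the kite (i.e. $H = K_{n-2} \sqcup K_1$) is the unique maximizer. I would argue by edge-switching on $H$, showing that every local edge move in $H$ directed toward $K_{n-2} \sqcup K_1$ strictly increases $\lambda_1^{(\alpha)}(G) - \beta\lambda_n^{(\gamma)}(G)$; this reduces to first-order eigenvalue-perturbation estimates involving the Perron eigenvector of $A_\alpha(G)$ and a minimum-eigenvector of $A_\gamma(G)$, which in turn can be analyzed via the already-known spectral structure of graphs near the kite. The main obstacle will be proving this local monotonicity quantitatively: both perturbations scale as $O(1/n)$, so leading coefficients must be tracked with care, and it is precisely here that the hypothesis $0 < \beta\gamma \leq 1$ enters decisively, calibrating the two eigenvalue terms so that the optimal trade-off is realized at the kite and nowhere else.
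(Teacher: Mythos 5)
The paper's own proof of this corollary is a one-line specialization of Theorem~1.2: take $\alpha=\gamma=1/2$ and $\beta=1$ and use $q_1(G)=2\lambda_1^{(1/2)}(G)$, $q_n(G)=2\lambda_n^{(1/2)}(G)$. You instead set out to reprove the general theorem from scratch. Your step (i) (the quotient-matrix benchmark $n-2-\beta\gamma+c/n$ with $c>0$) and the reduction $\Delta(G)\leq n-3$ are sound, but the remaining cases contain a genuine gap. In the regime $\Delta(G)=n-2$ with $G$ not regular, the ``quantitative non-regularity gap'' you invoke does not exist at the strength you need: for $G$ equal to $K_n$ minus a perfect matching minus one further edge, the all-ones test vector gives $\lambda_1^{(\alpha)}(G)\geq 2e(G)/n = n-2-2/n$, which already exceeds the benchmark $n-2-\beta\gamma+c/n$ for any fixed $\beta\gamma>0$ and large $n$. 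Since the only control you exert on the second term is $\lambda_n^{(\gamma)}(G)\geq 0$, such graphs survive your argument. The only way to eliminate them is to prove that $\lambda_n^{(\gamma)}(G)$ is large (it is in fact $\Omega(n)$) whenever $\delta(G)=\Omega(n)$, which requires analyzing the bottom eigenvector of $A_\gamma(G)$ and showing it cannot be spread over a dense graph; this is exactly what the paper's Lemmas~3.4 and~3.5 do, and nothing in your sketch substitutes for it.

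The same omission undermines your ``substantive step.'' Among all $G=u\vee H$, the claim that every local edge move on $H$ directed toward $K_{n-2}\sqcup K_1$ strictly increases $\lambda_1^{(\alpha)}(G)-\beta\lambda_n^{(\gamma)}(G)$ is asserted rather than proved, and the supporting heuristics only make sense near the kite: when $H$ is far from $K_{n-2}\sqcup K_1$ (e.g.\ $H$ empty, so $G$ is a star), neither eigenvalue perturbation is $O(1/n)$ and the ``spectral structure of graphs near the kite'' is unavailable. You would need (a) a proof of monotonicity valid for every $H$, not just in a neighborhood of the optimum, and (b) an argument that the kite is the unique local maximum reachable by such switches. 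Neither is supplied, and this is precisely where the paper instead argues globally: density forces almost all $z$-coordinates to be tiny, hence there is exactly one vertex of degree $o(n)$, the rest form a clique by the edge-addition criterion of Lemma~3.3, and the exceptional vertex must be pendant. As it stands the proposal is a plausible outline whose two hard cases are not closed.
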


Finally, we can establish a new finding by setting $\alpha = 0$, $\beta = 2$ and 
$\gamma = 1/2$ in Theorem \ref{thm:alpha-spread}.

\begin{corollary}
Among all connected graphs on $n$ vertices, $\lambda_1(G) - q_n(G)$
is maximized only by $Ki_{n,n-1}$ for sufficiently large $n$.
\end{corollary}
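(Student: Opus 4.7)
The plan is to compare the objective $F(G) := \lambda_1^{(\alpha)}(G) - \beta\lambda_n^{(\gamma)}(G)$ against the value at $Ki_{n,n-1}$ and show that for sufficiently large $n$, any connected graph matching $F(Ki_{n,n-1})$ must be isomorphic to $Ki_{n,n-1}$. First I would evaluate $F(Ki_{n,n-1})$ reasonably sharply. The partition $\{v_0\},\{v_1,\dots,v_{n-2}\},\{u\}$ (dominating clique vertex, clique interior, pendant) is equitable for both $A_\alpha$ and $A_\gamma$, so each extreme eigenvalue is either $\gamma(n-1)-1$ (coming from antisymmetric vectors on the clique interior, which contribute the same eigenvalue as for $K_{n-1}$) or an eigenvalue of an explicit $3\times 3$ quotient matrix. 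Plugging the all-ones-on-the-clique test vector into the Rayleigh quotient yields $\lambda_1^{(\alpha)}(Ki_{n,n-1})\ge n-2+\alpha/(n-1)$, and evaluating the characteristic polynomial of the quotient matrix at $0$ shows $\lambda_n^{(\gamma)}(Ki_{n,n-1})=O(1)$. Together these give $F(Ki_{n,n-1})\ge n-2-C$ for some constant $C=C(\alpha,\beta,\gamma)$ independent of $n$.

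Now let $G$ be any connected graph with $F(G)\ge F(Ki_{n,n-1})$. Because $\gamma\ge 1/2$ we have $A_\gamma(G)\succeq 0$, so $\lambda_n^{(\gamma)}(G)\ge 0$, and combined with the standard bound $\lambda_1^{(\alpha)}(G)\le\Delta(G)$ this yields $F(G)\le\Delta(G)$. Hence $\Delta(G)\ge n-2-C$. A finer argument—using a Rayleigh-quotient lower bound on $\Delta(G)-\lambda_1^{(\alpha)}(G)$ in terms of how many short walks from the maximum-degree vertex leave the closed neighbourhood, together with the PSD monotonicity of $A_\gamma$ under edge addition—upgrades this to $\Delta(G)=n-1$, so $G$ contains a dominating vertex $v^\ast$ and we may write $G = v^\ast\vee H$ for some graph $H$ on $n-1$ vertices.

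With $v^\ast$ dominating, the matrices $A_\alpha(G)$ and $A_\gamma(G)$ take the block forms
\[
A_\alpha(G)=\begin{pmatrix}\alpha(n-1) & (1-\alpha)\mathbf{1}^{\top}\\ (1-\alpha)\mathbf{1} & A_\alpha(H)+\alpha I\end{pmatrix},
\]
and similarly for $A_\gamma(G)$ with $\alpha$ replaced by $\gamma$. A Schur-complement/secular-equation analysis then expresses $\lambda_1^{(\alpha)}(G)$ and $\lambda_n^{(\gamma)}(G)$ via the resolvent quantities $\mathbf{1}^{\top}(tI-A_\alpha(H))^{-1}\mathbf{1}$ and $\mathbf{1}^{\top}(tI-A_\gamma(H))^{-1}\mathbf{1}$. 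Maximizing $F(v^\ast\vee H)$ over $H$ is now an eigenvalue-type comparison between graphs on $n-1$ vertices, and a compression/switching argument—iteratively moving non-clique edges into a growing clique inside $H$ and tracking the sign of the change in $F$—forces the optimal $H$ to be $K_{n-2}\cup K_1$, which yields exactly $G=Ki_{n,n-1}$.

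The hard part, I expect, is this last step. Adding an edge inside $H$ generally increases both $\lambda_1^{(\alpha)}(G)$ and $\lambda_n^{(\gamma)}(G)$ (the latter by the PSD monotonicity of $A_\gamma$ for $\gamma\ge 1/2$), so the two terms of $F(G)$ pull in opposite directions and the signed combination must be tracked precisely, balancing the $\alpha$-Perron weight at the edge endpoints against the weight of the $\gamma$-ground-state eigenvector there. Making these estimates uniform across the full admissible range $0\le\alpha<1$, $1/2\le\gamma<1$, $0<\beta\gamma\le 1$—in particular keeping the crossover threshold with $K_n$, which scales like $1/(\beta\gamma)$, under control—is precisely what forces the \emph{sufficiently large $n$} hypothesis and constitutes the main technical difficulty.
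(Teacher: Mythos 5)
The paper's own proof of this corollary is a one-line specialization: since $q_n(G)=2\lambda_n^{(1/2)}(G)$, the statement is exactly Theorem \ref{thm:alpha-spread} with $\alpha=0$, $\beta=2$, $\gamma=1/2$ (note $\beta\gamma=1$ is admissible). You instead set out to re-derive the full extremal result from scratch, and your outline leaves the two hardest steps unproven. First, the upgrade from $\Delta(G)\ge n-2-C$ to $\Delta(G)=n-1$ is only asserted (``a finer argument \dots upgrades this''); a graph can have $\lambda_1^{(\alpha)}$ within $O(1)$ of $n-1$ without containing a dominating vertex, so some quantitative structure argument is genuinely needed here, and you have not supplied it. Second, and more seriously, the compression step --- showing that among all $H$ on $n-1$ vertices the quantity $F(v^\ast\vee H)$ is maximized only by $H=K_{n-2}\cup K_1$ --- is the entire content of the theorem, and you explicitly defer it (``the hard part, I expect''). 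Tracking the sign of $\Delta\lambda_1^{(\alpha)}-\beta\,\Delta\lambda_n^{(\gamma)}$ under a single edge switch requires control of both eigenvectors' entries at the switched endpoints, which is precisely what the paper spends Section 3 establishing (Lemmas \ref{lem:x-v-precise-value-for-Aalpha} and \ref{lem:z-v-precise-value-for-Aalpha}); without those estimates the ``signed combination'' cannot be evaluated. As written, the proposal is an outline with the core difficulties unresolved, so it does not constitute a proof.

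For comparison, the paper's route to the underlying theorem avoids both of your bottlenecks: rather than first locating a dominating vertex and then optimizing over $H$ via Schur complements, it works directly with the eigenvectors $\bm{x}$ of $\lambda_1^{(\alpha)}$ and $\bm{z}$ of $\lambda_n^{(\gamma)}$, shows that the set $L$ of vertices where $\bm{z}$ is large has size $o(n)$, deduces that exactly one vertex has degree $o(n)$, and then uses the perturbation inequality of Lemma \ref{lem:adjacent-iff-for-Aalpha} (add the edge $uv$ whenever $\alpha(x_u-x_v)^2-\beta\gamma(z_u-z_v)^2+2(x_ux_v-\beta z_uz_v)>0$) to conclude in one stroke that the remaining $n-1$ vertices induce a clique and the exceptional vertex is pendant. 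If your goal is only the stated corollary, the correct and complete proof is simply to cite Theorem \ref{thm:alpha-spread} with the parameter choice above.
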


\section{Preliminaries}
\label{sec2}

In this section we review some facts on eigenvalue\,--\,eigenvector equation, 
quadratic form of matrices and necessary conclusions that will be used throughout the paper.

\subsection{Terminology and notation}

Let $G$ be a graph. 
For subset $X \subset V(G)$, the subgraph of $G$ induced 
by $X$ is denoted by $G[X]$, and the graph obtained from $G$ by deleting $X$ is denoted 
by $G\setminus X$. For subsets $X, Y\subset V(G)$, we write $E(X, Y)$ for the set of edges 
with one vertex in $X$ and the other in $Y$, and write $E(X)$ for the set of edges induced 
by $X$. For a vertex $v$ of $G$, we write $d_G(v)$ and $N_G(v)$ for the degree of $v$ and 
the set of neighbors of $v$ in $G$, respectively. If the underlying graph $G$ is clear 
from the context, simply write $d(v)$ and $N(v)$. We denote the maximum degree and minimum 
degree of $G$ by $\Delta(G)$ and $\delta(G)$, respectively. For graph notation and 
terminology undefined here, we refer the reader to \cite{Bondy-Murty2008}.

\subsection{Eigenvalue-eigenvector equation and quadratic form}

Let $G$ be a graph of order $n$, and let $\bm{x}\in\mathbb{R}^n$ be an eigenvector of $A_{\alpha}(G)$ 
corresponding to an eigenvalue $\lambda$. The eigenvalue\,--\,eigenvector equation for 
$\lambda$ with respect to $u\in V(G)$ is 
\begin{equation}\label{eq:eigenvalue-vector-equation}
\lambda x_u = \alpha d(u) x_u + (1 - \alpha) \sum_{v\in N(u)} x_v.   
\end{equation}
Clearly, the quadratic form
$\langle A_{\alpha} (G)\bm{x}, \bm{x}\rangle$ can be represented as
\begin{equation}\label{eq:quadratic-form-for-Aalpha-matrix}
\langle A_{\alpha} (G)\bm{x}, \bm{x}\rangle 
= \sum_{uv\in E(G)} \big( \alpha(x_u-x_v)^2+2x_ux_v).
\end{equation}
Additionally, the quadratic form $\langle A_{\alpha} (G)\bm{x}, \bm{x}\rangle$ can be 
equivalently written as
\begin{equation}\label{eq:quadratic-form-for-Aalpha-matrix-2}
\langle A_{\alpha} (G)\bm{x}, \bm{x}\rangle 
= (2\alpha - 1) \sum_{u\in V(G)} d(u) x_u^2 + (1-\alpha) \sum_{uv\in E(G)} (x_u + x_v)^2.
\end{equation}

\subsection{Basic properties of $A_{\alpha}(G)$}

Since $A_{\alpha} (G)$ is a real symmetric matrix, Rayleigh's principle implies 
the following assertion.

\begin{proposition}[\cite{Nikiforov2017}]\label{prop:Rayleigh-principle}
Let $\alpha\in [0,1]$ and $G$ be a graph of order $n$. Then 
\[
\lambda_1^{(\alpha)} (G) = \max_{\|\bm{x}\|_2=1} \langle A_{\alpha} (G)\bm{x}, \bm{x}\rangle, \quad 
\lambda_n^{(\alpha)} (G) = \min_{\|\bm{x}\|_2=1} \langle A_{\alpha} (G)\bm{x}, \bm{x}\rangle.
\]
Moreover, if $\bm{x}$ is a unit vector, then $\lambda_1^{(\alpha)} (G) = \langle A_{\alpha} (G)\bm{x}, \bm{x}\rangle$ 
if and only if $\bm{x}$ is an eigenvector to $\lambda_1^{(\alpha)} (G)$ and 
$\lambda_n^{(\alpha)} (G) = \langle A_{\alpha} (G)\bm{x}, \bm{x}\rangle$ if and only 
if $\bm{x}$ is an eigenvector to $\lambda_n^{(\alpha)} (G)$.
\end{proposition}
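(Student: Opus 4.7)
The plan is to invoke the standard Rayleigh quotient argument for real symmetric matrices. First I would observe that $A_{\alpha}(G) = \alpha D(G) + (1-\alpha) A(G)$ is a real symmetric matrix, since both $D(G)$ and $A(G)$ are real symmetric and symmetry is preserved under real linear combinations. By the spectral theorem, there exists an orthonormal basis $\{\bm{v}_1, \ldots, \bm{v}_n\}$ of $\mathbb{R}^n$ consisting of eigenvectors of $A_{\alpha}(G)$, with corresponding eigenvalues $\lambda_1^{(\alpha)}(G) \geq \lambda_2^{(\alpha)}(G) \geq \cdots \geq \lambda_n^{(\alpha)}(G)$.

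Next I would expand an arbitrary unit vector $\bm{x} \in \mathbb{R}^n$ in this basis as $\bm{x} = \sum_{i=1}^n c_i \bm{v}_i$ with $\sum_{i=1}^n c_i^2 = 1$, and use orthonormality to obtain the identity
\[
\langle A_{\alpha}(G)\bm{x}, \bm{x}\rangle = \sum_{i=1}^n \lambda_i^{(\alpha)}(G)\, c_i^2,
\]
which represents the quadratic form as a convex combination of the eigenvalues. It immediately follows that $\lambda_n^{(\alpha)}(G) \leq \langle A_{\alpha}(G)\bm{x}, \bm{x}\rangle \leq \lambda_1^{(\alpha)}(G)$, and these bounds are realized by $\bm{x} = \bm{v}_1$ and $\bm{x} = \bm{v}_n$ respectively, giving both extremal identities.

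For the equality characterization, suppose $\|\bm{x}\|_2 = 1$ satisfies $\langle A_{\alpha}(G)\bm{x}, \bm{x}\rangle = \lambda_1^{(\alpha)}(G)$. Rearranging the identity above yields $\sum_{i=1}^n c_i^2\bigl(\lambda_1^{(\alpha)}(G) - \lambda_i^{(\alpha)}(G)\bigr) = 0$, and since every summand is non-negative we must have $c_i = 0$ whenever $\lambda_i^{(\alpha)}(G) < \lambda_1^{(\alpha)}(G)$. Hence $\bm{x}$ lies in the eigenspace of $\lambda_1^{(\alpha)}(G)$, so it is an eigenvector. The argument for $\lambda_n^{(\alpha)}(G)$ is entirely symmetric. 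There is no genuine obstacle here: the proposition is merely the Rayleigh-quotient characterization of the extreme eigenvalues of a real symmetric matrix, and the proof reduces to one application of the spectral theorem followed by an elementary convex-combination argument.
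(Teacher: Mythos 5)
Your proof is correct and is exactly the standard Rayleigh-quotient argument the paper implicitly relies on: the paper states this proposition without proof, noting only that ``$A_{\alpha}(G)$ is a real symmetric matrix'' so Rayleigh's principle applies, and cites Nikiforov. Your spectral-theorem expansion and the convex-combination equality analysis correctly fill in that standard argument, so there is nothing to correct.
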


\begin{proposition}[\cite{Nikiforov2017}]\label{prop:Aalpha-properties}
Let $0\leq\alpha < 1$. The following conclusions hold.
\begin{enumerate}
\item[$(1)$] If $\alpha \geq 1/2$, then $A_{\alpha} (G)$ is positive semidefinite.

\item[$(2)$] $\lambda_n^{(\alpha)} (G) < \alpha\cdot\delta (G)$.
\end{enumerate}
\end{proposition}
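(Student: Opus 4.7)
The plan is to derive both parts directly from the quadratic form identities \eqref{eq:quadratic-form-for-Aalpha-matrix} and \eqref{eq:quadratic-form-for-Aalpha-matrix-2} for $\langle A_{\alpha}(G)\bm{x}, \bm{x}\rangle$, combined with Rayleigh's principle (Proposition~\ref{prop:Rayleigh-principle}). Nothing beyond these two representations will be needed.

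For part~(1), I would apply the identity \eqref{eq:quadratic-form-for-Aalpha-matrix-2}. When $\alpha \geq 1/2$, both scalar coefficients $2\alpha-1$ and $1-\alpha$ are nonnegative, and the two sums they multiply---$\sum_{u}d(u)x_u^2$ and $\sum_{uv\in E(G)}(x_u+x_v)^2$---are manifestly nonnegative. Hence $\langle A_{\alpha}(G)\bm{x},\bm{x}\rangle \geq 0$ for every $\bm{x}\in\mathbb{R}^n$, and Rayleigh's principle then forces $\lambda_n^{(\alpha)}(G)\geq 0$; equivalently, $A_{\alpha}(G)$ is positive semidefinite.

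For part~(2), I would test the Rayleigh quotient on the standard basis vector $\bm{e}_u$, where $u$ is any vertex with $d(u)=\delta(G)$. Reading off the $u$-th diagonal entry of $A_{\alpha}(G)$ immediately gives $\langle A_{\alpha}(G)\bm{e}_u, \bm{e}_u\rangle = \alpha\, d(u) = \alpha\,\delta(G)$, so Proposition~\ref{prop:Rayleigh-principle} yields the non-strict bound $\lambda_n^{(\alpha)}(G)\leq \alpha\,\delta(G)$. To upgrade this to strict inequality, I would invoke the equality clause of the same proposition: equality would force $\bm{e}_u$ to be a $\lambda_n^{(\alpha)}$-eigenvector of $A_{\alpha}(G)$. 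However, for any neighbor $v\in N(u)$, the $v$-th coordinate of $A_{\alpha}(G)\bm{e}_u$ equals $1-\alpha > 0$ while $(\bm{e}_u)_v = 0$, so $A_{\alpha}(G)\bm{e}_u$ cannot be a scalar multiple of $\bm{e}_u$. This rules out equality and yields $\lambda_n^{(\alpha)}(G) < \alpha\,\delta(G)$.

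The only mild subtlety---more a tacit hypothesis than a genuine obstacle---concerns the strictness argument: the chosen vertex $u$ must have at least one neighbor in order for $A_{\alpha}(G)\bm{e}_u$ to differ from a scalar multiple of $\bm{e}_u$, so the assertion is implicitly about graphs with $\delta(G)\geq 1$ (if $u$ were isolated, then $\bm{e}_u$ is itself a $0$-eigenvector and strictness fails when $\alpha\geq 1/2$). This is automatic in the setting of the paper, where the objects of interest are connected graphs on $n\geq 2$ vertices.
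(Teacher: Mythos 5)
Your proof is correct. Note that the paper itself offers no proof of this proposition: it is imported verbatim from \cite{Nikiforov2017}, so there is no in-paper argument to compare against. Your part (1) is the standard argument (and essentially Nikiforov's): the identity \eqref{eq:quadratic-form-for-Aalpha-matrix-2} exhibits $A_{\alpha}(G)$ as a nonnegative combination of the positive semidefinite forms $\sum_u d(u)x_u^2$ and $\sum_{uv\in E(G)}(x_u+x_v)^2$ once $2\alpha-1\geq 0$, which is exactly the decomposition $A_{\alpha}=(2\alpha-1)D+(1-\alpha)Q$ at the matrix level. Your part (2) --- testing $\bm{e}_u$ at a minimum-degree vertex to get $\lambda_n^{(\alpha)}(G)\leq\alpha\delta(G)$, then using the equality clause of Proposition \ref{prop:Rayleigh-principle} together with $(A_{\alpha}(G)\bm{e}_u)_v=1-\alpha>0$ for $v\in N(u)$ to exclude equality --- is clean and complete. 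You are also right to flag the tacit hypothesis $\delta(G)\geq 1$: for a graph with an isolated vertex and $\alpha\geq 1/2$ the strict inequality genuinely fails ($\lambda_n^{(\alpha)}=0=\alpha\delta$), so the statement as quoted implicitly assumes every vertex has a neighbor; this is harmless here since the paper only ever applies the bound to connected graphs on $n\geq 2$ vertices, but it is worth having recorded.
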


For $0\leq\alpha\leq 1$, $\beta > 0$ and $0\leq\gamma\leq 1$, as stated in 
Proposition \ref{prop:Rayleigh-principle}, the expression 
$\lambda_1^{(\alpha)} (G) - \beta\cdot\lambda_n^{(\gamma)} (G)$ 
can be written as 
\begin{equation}\label{eq:max-min-Aalpha-1-n}
\lambda_1^{(\alpha)} (G) {-} \beta\cdot\lambda_n^{(\gamma)} (G) 
= \max_{\|\bm{x}\|_2=\|\bm{z}\|_2=1} 
\sum_{uv\in E(G)} \!\!\big( \alpha (x_u {-} x_v)^2 {-} \beta\gamma (z_u {-} z_v)^2 {+} 2(x_ux_v {-} \beta z_uz_v) \big).
\end{equation}

The subsequent statement extends the inequality established by Hong, Shu, and Fang \cite{Hong-Shu-Fang2001}.

\begin{lemma}[\cite{HuangLinXue2020}]\label{lem:upper-bound-for-Aalpha}
Suppose that $G$ is an $n$-vertex graph with $m$ edges. Let $\delta$ and $\Delta$ be the 
minimum and maximum degree of $G$. If $\alpha\in [0, 1)$, then
\[
\lambda_1^{(\alpha)} (G) \leq \frac{ (1 {-} \alpha)(\delta {-} 1) {+} 
\sqrt{(1 {-} \alpha)^2 (\delta {-} 1)^2 + 4(\alpha\Delta^2 {+} (1 {-} \alpha) (2m {-} (n {-} 1)\delta))} }{2}.
\]
\end{lemma}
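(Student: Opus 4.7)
The plan is to extend the classical Hong--Shu--Fang bound for the adjacency spectral radius to the $A_\alpha$-setting. Let $\lambda = \lambda_1^{(\alpha)}(G)$ and, by Perron--Frobenius (applicable since $G$ is connected and $A_\alpha(G)$ is nonnegative and irreducible for $\alpha \in [0,1)$), let $\bm{x}$ be a positive eigenvector of $A_\alpha(G)$ for $\lambda$. I rescale so that $x_u = \max_v x_v = 1$ at some vertex $u$. The eigenvalue--eigenvector equation \eqref{eq:eigenvalue-vector-equation} at $u$ gives
\[
\lambda - \alpha d(u) = (1-\alpha)\sum_{v \in N(u)} x_v,
\]
from which the useful auxiliary fact $\lambda \leq d(u) \leq \Delta$ follows immediately by using $x_v \leq 1$.

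The main step will be to multiply this identity by $\lambda$ and to substitute \eqref{eq:eigenvalue-vector-equation} at each neighbor $v \in N(u)$, namely $\lambda x_v = \alpha d(v) x_v + (1-\alpha)\sum_{w \in N(v)} x_w$, producing the key identity
\[
\lambda\bigl(\lambda - \alpha d(u)\bigr) = \alpha(1-\alpha)\sum_{v \in N(u)} d(v)\,x_v + (1-\alpha)^{2} \sum_{v \in N(u)}\sum_{w \in N(v)} x_w.
\]
I would then bound both right-hand sums using $x_v, x_w \leq 1$ and invoke the minimum-degree estimate
\[
\sum_{v \in N(u)} d(v) \leq 2m - d(u) - (n-1-d(u))\delta = \bigl(2m - (n-1)\delta\bigr) + d(u)(\delta-1),
\]
which holds because the $n-1-d(u)$ vertices of $V(G) \setminus (N(u) \cup \{u\})$ each have degree at least $\delta$. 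Combining these estimates carefully---in particular, absorbing the $\alpha d(u)\,\lambda$ contribution into $\alpha\Delta^{2}$ via $\lambda \leq d(u) \leq \Delta$, and reintroducing $\lambda$ as a multiplier of $(\delta-1)$ through the global identity $\sum_v d(v)\,x_v = \lambda \sum_v x_v$ (obtained by summing \eqref{eq:eigenvalue-vector-equation} over all vertices)---will yield a quadratic inequality
\[
\lambda^{2} - (1-\alpha)(\delta-1)\lambda - \alpha\Delta^{2} - (1-\alpha)\bigl(2m-(n-1)\delta\bigr) \leq 0,
\]
whose larger root is the stated bound.

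The principal technical obstacle lies in the bookkeeping of the last step. A naive use of $x_v \leq 1$ produces only the weaker estimate $\lambda^{2} - \alpha d(u)\lambda \leq (1-\alpha)(\delta-1)d(u) + (1-\alpha)\bigl(2m-(n-1)\delta\bigr)$, and a uniform replacement $d(u) \leq \Delta$ applied to every remaining $d(u)$ factor delivers a quadratic with linear coefficient $\alpha\Delta$ rather than $(1-\alpha)(\delta-1)$. To secure the sharper coefficient one must split the work: route the $\alpha$-contribution through $\alpha d(u)\lambda \leq \alpha\Delta^{2}$, and extract the $(1-\alpha)(\delta-1)\lambda$ term from the double sum by means of a Cauchy--Schwarz-type inequality coupled with the global identity above. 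Arranging these manipulations so that exactly the stated discriminant emerges is the heart of the proof.
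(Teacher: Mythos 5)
First, a point of reference: the paper itself does not prove this lemma at all --- it is imported verbatim from Huang, Lin and Xue \cite{HuangLinXue2020} --- so your argument has to stand entirely on its own, and judged that way it contains a genuine gap, namely the one you flag yourself in your final paragraph. The steps you actually carry out (Perron vector normalized so that $x_u=\max_v x_v=1$, the identity obtained by iterating \eqref{eq:eigenvalue-vector-equation} at $u$ and its neighbours, the auxiliary bound $\lambda\le d(u)\le\Delta$, and the estimate $\sum_{v\in N(u)}d(v)\le 2m-(n-1)\delta+d(u)(\delta-1)$) yield only
\[
\lambda^{2}\ \le\ \alpha d(u)\lambda+(1-\alpha)(\delta-1)\,d(u)+(1-\alpha)\bigl(2m-(n-1)\delta\bigr),
\]
and since $d(u)\ge\lambda$, the middle term sits on the wrong side of what is needed: $(1-\alpha)(\delta-1)d(u)\ge(1-\alpha)(\delta-1)\lambda$ whenever $\delta\ge 1$, so this inequality does \emph{not} imply the target quadratic $\lambda^{2}-(1-\alpha)(\delta-1)\lambda-\alpha\Delta^{2}-(1-\alpha)(2m-(n-1)\delta)\le 0$. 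You acknowledge exactly this and propose to recover the coefficient $(1-\alpha)(\delta-1)$ on $\lambda$ ``by means of a Cauchy--Schwarz-type inequality coupled with the global identity $\sum_v d(v)x_v=\lambda\sum_v x_v$,'' but you never exhibit such an inequality, and it is far from clear that one exists in the form you describe: the global identity aggregates over all vertices, whereas the quantity you must control, $(\delta-1)\sum_{v\in N(u)}x_v=(\delta-1)(\lambda-\alpha d(u))/(1-\alpha)$, is local to $u$ and reintroduces the very factor $d(u)$ you are trying to eliminate. Since you yourself call this missing manipulation ``the heart of the proof,'' what you have written is a plan with the decisive step absent, not a proof.

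Two smaller remarks. The lemma does not assume $G$ is connected, so you should not invoke irreducibility; either pass to the component attaining $\lambda_1^{(\alpha)}$ (and check that replacing that component's parameters by the global $m$, $n$, $\delta$, $\Delta$ only weakens the bound --- the monotonicity computation in the proof of Lemma \ref{lem:size-lower-bound-for-Aalpha} is relevant here) or work with a nonnegative, not necessarily positive, eigenvector. And if you want to complete this route, the repair is unlikely to come from Cauchy--Schwarz: the $\alpha=0$ prototype of Hong, Shu and Fang \cite{Hong-Shu-Fang2001} and its $A_\alpha$ extension in \cite{HuangLinXue2020} succeed by a more delicate accounting of the double sum $\sum_{v\in N(u)}\sum_{w\in N(v)}x_w$ in which the entries $x_v$, $v\in N(u)$, are retained rather than bounded by $1$, so that the eigenvalue--eigenvector equation at $u$ can be applied a second time; you should consult those proofs rather than leave the step as a gesture.
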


\section{Proof of Theorem \ref{thm:alpha-spread}}
\label{sec4}

Throughout this section, we always assume that $\alpha\in [0,1)$, $\gamma \in [1/2, 1)$, $\beta\gamma\in (0,1]$, 
and that $G$ is a graph that maximizes $\lambda_1^{(\alpha)}(G) - \beta \cdot \lambda_n^{(\gamma)}(G)$ 
among all connected $n$-vertex graphs. Let $\bm{x}$ be a nonnegative unit eigenvectors of $A_{\alpha}(G)$ 
corresponding to $\lambda_1^{(\alpha)}(G)$, and let $\bm{z}$ be a unit eigenvectors of $A_{\gamma}(G)$ 
corresponding to $\lambda_n^{(\gamma)}(G)$. We also set 
$\lambda_1^{(\alpha)} := \lambda_1^{(\alpha)}(G)$ and $\lambda_n^{(\gamma)} := \lambda_n^{(\gamma)}(G)$ for short.

Let us begin with a few basic properties that will be used repeatedly.

\subsection{Preliminary lemmas}

\begin{lemma}\label{lem:Aalpha-lower-bound}
$\lambda_1^{(\alpha)} > n - 2 - \beta\gamma$ and $0\leq \lambda_n^{(\gamma)} < 2/\beta$.
\end{lemma}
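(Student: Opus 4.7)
The plan is to leverage the assumed maximality of $G$ by benchmarking against the explicit kite $Ki_{n,n-1}$, which the main theorem claims is the extremal graph. Since $G$ maximizes $\lambda_1^{(\alpha)}(G) - \beta\lambda_n^{(\gamma)}(G)$ over all connected $n$-vertex graphs, the comparison
\[
\lambda_1^{(\alpha)}(G) - \beta\lambda_n^{(\gamma)}(G) \;\geq\; \lambda_1^{(\alpha)}(Ki_{n,n-1}) - \beta\lambda_n^{(\gamma)}(Ki_{n,n-1})
\]
is automatic, so the main task reduces to bounding the two spectral quantities of the kite reasonably tightly.

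For the largest $A_{\alpha}$-eigenvalue of $Ki_{n,n-1}$, I would invoke \autoref{prop:Rayleigh-principle} with the unit test vector that assigns $1/\sqrt{n-1}$ to every vertex of the $K_{n-1}$-clique and $0$ to the pendant vertex; plugging into the quadratic form \eqref{eq:quadratic-form-for-Aalpha-matrix} gives a Rayleigh quotient equal to $(n-2) + \alpha/(n-1)$, yielding $\lambda_1^{(\alpha)}(Ki_{n,n-1}) > n-2$ whenever $\alpha>0$. The boundary case $\alpha=0$ can be disposed of either by perturbing the pendant entry to a small $\epsilon>0$ (the derivative of the Rayleigh quotient at $\epsilon=0$ equals $2(1-\alpha)/(n-1)>0$) or by the strict Perron--Frobenius monotonicity of the spectral radius under edge addition in a connected graph. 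For the smallest $A_{\gamma}$-eigenvalue, \autoref{prop:Aalpha-properties}(2) together with $\delta(Ki_{n,n-1})=1$ immediately gives $\lambda_n^{(\gamma)}(Ki_{n,n-1}) < \gamma$. Combining the two,
\[
\lambda_1^{(\alpha)}(G) - \beta\lambda_n^{(\gamma)}(G) \;>\; (n-2) - \beta\gamma.
\]

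Both claims of the lemma now follow quickly. Since $\gamma\geq 1/2$, \autoref{prop:Aalpha-properties}(1) implies $A_{\gamma}(G)$ is positive semidefinite and hence $\lambda_n^{(\gamma)}(G)\geq 0$; dropping the nonpositive $-\beta\lambda_n^{(\gamma)}(G)$ in the displayed inequality gives the first assertion $\lambda_1^{(\alpha)}(G) > n-2-\beta\gamma$. For the upper bound on $\lambda_n^{(\gamma)}(G)$, I would use the elementary Perron/Gershgorin bound $\lambda_1^{(\alpha)}(G) \leq \Delta(G) \leq n-1$ on $A_{\alpha}(G)$ (whose row sums are the degrees) and rearrange the same inequality to obtain
\[
\beta\lambda_n^{(\gamma)}(G) \;<\; \lambda_1^{(\alpha)}(G) - (n-2-\beta\gamma) \;\leq\; 1+\beta\gamma \;\leq\; 2,
\]
where the last step invokes the hypothesis $\beta\gamma\leq 1$; division by $\beta$ yields $\lambda_n^{(\gamma)}(G) < 2/\beta$. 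The whole argument is short and presents no real obstacle; the only mild nuisance is the $\alpha=0$ endpoint in the kite Rayleigh estimate, which is handled by the perturbation indicated above.
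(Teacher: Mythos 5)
Your proposal is correct and follows essentially the same route as the paper: benchmark $G$ against $Ki_{n,n-1}$, bound $\lambda_n^{(\gamma)}(Ki_{n,n-1})<\gamma$ via \autoref{prop:Aalpha-properties}(2), use positive semidefiniteness for $\lambda_n^{(\gamma)}(G)\geq 0$, and rearrange with $\lambda_1^{(\alpha)}(G)\leq\Delta(G)\leq n-1$. The only cosmetic difference is that the paper gets $\lambda_1^{(\alpha)}(Ki_{n,n-1})>n-2$ by comparing directly with the subgraph $K_{n-1}$, whereas you compute an explicit Rayleigh quotient and handle $\alpha=0$ by perturbation; both are fine.
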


\begin{proof}
For the kite graph $Ki_{n,n-1}$, according to Proposition \ref{prop:Aalpha-properties}(2), $\lambda_n^{(\gamma)} (Ki_{n,n-1}) < \gamma$.
In light of the maximality of $\lambda_1^{(\alpha)} - \beta\cdot \lambda_n^{(\gamma)}$, we conclude that 
\begin{align*}
\lambda_1^{(\alpha)} - \beta\cdot \lambda_n^{(\gamma)}
& \geq \lambda_1^{(\alpha)} (Ki_{n,n-1}) - \beta \cdot \lambda_n^{(\gamma)}(Ki_{n,n-1}) \\
& > \lambda_1^{(\alpha)} (K_{n-1}) - \beta\gamma \\
& = n - 2 - \beta\gamma.
\end{align*}
Since $\lambda_n^{(\gamma)} \geq 0$ for $\gamma\in [1/2, 1)$, we have 
$\lambda_1^{(\alpha)} \geq \lambda_1^{(\alpha)} - \beta\cdot \lambda_n^{(\gamma)} > n - 2 - \beta\gamma$. 

On the other hand, it follows from $\lambda_1^{(\alpha)} \leq \Delta (G) \leq n-1$ that 
\[ 
n - 2 - \beta\gamma < \lambda_1^{(\alpha)} - \beta\cdot \lambda_n^{(\gamma)} \leq (n-1) - \beta\cdot \lambda_n^{(\gamma)},
\] 
which yields that $\lambda_n^{(\gamma)} < (1+\beta\gamma)/\beta \leq 2/\beta$. This completes the proof of the lemma.
\end{proof}

Lemma \ref{lem:Aalpha-lower-bound} implies that $\lambda_1^{(\alpha)} > n-2-\beta$. By employing Lemma \ref{lem:upper-bound-for-Aalpha}, 
we arrive at the subsequent assertion.

\begin{lemma}\label{lem:size-lower-bound-for-Aalpha}
$2e(G) > n^2 - \frac{ 3-\alpha+2\beta }{1-\alpha} n$.
\end{lemma}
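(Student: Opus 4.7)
The plan is to apply Lemma~\ref{lem:upper-bound-for-Aalpha} directly to the extremal graph $G$, rearrange it into a lower bound on $e(G)$, and then substitute the estimate $\lambda_1^{(\alpha)} > n - 2 - \beta$ (which follows from Lemma~\ref{lem:Aalpha-lower-bound} together with $\beta\gamma \leq \beta$) along with the trivial bounds $\delta(G) \geq 1$ (from connectedness) and $\Delta(G) \leq n-1$.

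Concretely, I would first verify that $2\lambda_1^{(\alpha)} > (1-\alpha)(\delta(G)-1)$ holds for $n$ sufficiently large, which is immediate from $2(n-2-\beta) > n - 2 \geq (1-\alpha)(\delta(G)-1)$; this justifies squaring the inequality in Lemma~\ref{lem:upper-bound-for-Aalpha}. After squaring, the $(1-\alpha)^2(\delta-1)^2$ contributions cancel, and rearranging isolates the edge count in the form
$$2e(G)(1-\alpha) \;\geq\; (\lambda_1^{(\alpha)})^2 - \alpha\,\Delta(G)^2 + (1-\alpha)\Bigl[\lambda_1^{(\alpha)} + \delta(G)\bigl(n - 1 - \lambda_1^{(\alpha)}\bigr)\Bigr].$$
Because $\lambda_1^{(\alpha)} \leq \Delta(G) \leq n-1$, the bracketed factor is non-negative and non-decreasing in $\delta(G)$, so substituting $\delta(G) = 1$ and $\Delta(G) \leq n-1$ only weakens the bound; this reduces the right-hand side to $(\lambda_1^{(\alpha)})^2 - \alpha(n-1)^2 + (1-\alpha)(n-1)$.

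Finally I would insert $\lambda_1^{(\alpha)} > n - 2 - \beta$, expand $(n-2-\beta)^2$, and collect powers of $n$. A direct computation shows that the coefficients of $n^2$ and $n$ on the right match the target $(1-\alpha)n^2 - (3 - \alpha + 2\beta)n$ exactly, while the residual constant is $(\beta+2)^2 - 1 > 0$; dividing through by $1-\alpha > 0$ then yields the claimed strict inequality. I do not expect a serious obstacle: the only mild subtlety is confirming the monotonicity in $\delta(G)$ of the intermediate expression, which rests on the one-line inequality $\lambda_1^{(\alpha)} \leq n-1$, and everything else is pure algebraic bookkeeping.
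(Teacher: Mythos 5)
Your proposal is correct and follows essentially the same route as the paper: apply Lemma~\ref{lem:upper-bound-for-Aalpha} with $\Delta(G)\leq n-1$, reduce to $\delta(G)=1$ by a monotonicity argument, and combine with $\lambda_1^{(\alpha)}>n-2-\beta$ to reach $2(1-\alpha)e(G)>(1-\alpha)n^2-(3-\alpha+2\beta)n+\beta^2+4\beta+3$. The only (cosmetic) difference is the order of operations --- you square first and then exploit monotonicity in $\delta$ of the resulting polynomial expression, whereas the paper first shows the function $f(\delta)$ itself is non-increasing via its derivative and then squares $f(1)/2$.
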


\begin{proof}
By Lemma \ref{lem:upper-bound-for-Aalpha} and $\Delta (G) \leq n-1$ we see $\lambda_1^{(\alpha)} \leq f(\delta (G))/2$, 
where $f(x)$ is given by
\[
f(x) := (1 {-} \alpha)(x {-} 1) {+} \sqrt{(1 {-} \alpha)^2 (x {-} 1)^2 {+} 4\big[ \alpha (n {-} 1)^2 {+} (1 {-} \alpha) (2e(G) {-} (n {-} 1)x) \big]}.
\]
It is claimed that $f(x)$ is non-increasing in $x$. Indeed, we have
\begin{align*}
\frac{f'(x)}{1-\alpha} 
& = 1 - \frac{ 2(n-1) - (1-\alpha)(x-1) }{ \sqrt{(1 {-} \alpha)^2 (x {-} 1)^2 {+} 4\big[ \alpha (n {-} 1)^2 {+} (1 {-} \alpha) (2 e(G) {-} (n {-} 1)x) \big]} } \\
& = \frac{f(x) - 2(n-1)}{\sqrt{(1 {-} \alpha)^2 (x {-} 1)^2 {+} 4\big[ \alpha (n {-} 1)^2 {+} (1 {-} \alpha) (2 e(G) {-} (n {-} 1)x) \big]}} \\
& \leq 0,
\end{align*}
the last inequality is due to $e(G) \leq \binom{n}{2}$. It follows that 
\[
\lambda_1^{(\alpha)} \leq \frac{f(1)}{2}
= \sqrt{\alpha (n-1)^2 + (1-\alpha) (2e(G)-n+1) }.
\]
On the other hand, as shown in Lemma \ref{lem:Aalpha-lower-bound}, 
$\lambda_1^{(\alpha)} > n - 2 - \beta$. Therefore, 
\[
\sqrt{\alpha (n-1)^2 + (1 - \alpha) (2e(G) - n + 1) } > n - 2 - \beta.
\]
By rearranging, we obtain
\begin{align*}
2(1-\alpha)\cdot e(G) 
& > (1-\alpha) n^2 - (3-\alpha+2\beta) n + \beta^2 + 4\beta + 3 \\
& > (1-\alpha) n^2 - (3-\alpha+2\beta) n.
\end{align*}
This completes the proof.
\end{proof}

\begin{lemma}\label{lem:adjacent-iff-for-Aalpha}
For any vertices $u$ and $v$, if $\alpha (x_u - x_v)^2 - \beta\gamma (z_u - z_v)^2 + 2 (x_ux_v - \beta z_uz_v ) > 0$, 
then $u$ and $v$ are adjacent; if $\alpha (x_u - x_v)^2 - \beta\gamma (z_u - z_v)^2 + 2 (x_ux_v - \beta z_uz_v ) < 0$ 
and $G-uv$ is connected, then $u$ and $v$ are non-adjacent.
\end{lemma}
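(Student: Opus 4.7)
The natural plan is a perturbation argument via the max-min characterization in \eqref{eq:max-min-Aalpha-1-n}: perturb $G$ by adding or deleting the edge $uv$ and compare the values of $\lambda_1^{(\alpha)} - \beta\lambda_n^{(\gamma)}$ using $\bm{x}$ and $\bm{z}$ as (generally non-optimal) test vectors on the new graph.

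First I would record the key identity. Since $\bm{x}$ and $\bm{z}$ are unit eigenvectors for $\lambda_1^{(\alpha)}(G)$ and $\lambda_n^{(\gamma)}(G)$, applying \eqref{eq:quadratic-form-for-Aalpha-matrix} twice gives
\[
\lambda_1^{(\alpha)} - \beta\lambda_n^{(\gamma)}
= \sum_{ab\in E(G)} \bigl(\alpha (x_a-x_b)^2 - \beta\gamma (z_a-z_b)^2 + 2(x_ax_b - \beta z_az_b)\bigr).
\]
Denote the summand corresponding to a pair $\{u,v\}$ by
\[
\Phi(u,v) := \alpha(x_u-x_v)^2 - \beta\gamma(z_u-z_v)^2 + 2(x_ux_v - \beta z_uz_v).
\]

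Next, for the first implication, suppose $\Phi(u,v)>0$ but $uv\notin E(G)$. Form the graph $G' = G + uv$, which is still a connected graph on $n$ vertices. Using $\bm{x}$ and $\bm{z}$ as test vectors together with Proposition \ref{prop:Rayleigh-principle} (which gives $\lambda_1^{(\alpha)}(G')\geq \langle A_\alpha(G')\bm{x},\bm{x}\rangle$ and $\lambda_n^{(\gamma)}(G')\leq \langle A_\gamma(G')\bm{z},\bm{z}\rangle$, hence $-\beta\lambda_n^{(\gamma)}(G')\geq -\beta\langle A_\gamma(G')\bm{z},\bm{z}\rangle$), we get
\[
\lambda_1^{(\alpha)}(G') - \beta\lambda_n^{(\gamma)}(G')
\geq \lambda_1^{(\alpha)}(G) - \beta\lambda_n^{(\gamma)}(G) + \Phi(u,v),
\]
contradicting the maximality of $G$. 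Hence $uv\in E(G)$.

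For the second implication, suppose $\Phi(u,v)<0$, $uv\in E(G)$ and $G-uv$ is connected. Applying the same Rayleigh argument to $G'' = G - uv$ (which is a connected $n$-vertex graph) yields
\[
\lambda_1^{(\alpha)}(G'') - \beta\lambda_n^{(\gamma)}(G'')
\geq \lambda_1^{(\alpha)}(G) - \beta\lambda_n^{(\gamma)}(G) - \Phi(u,v) > \lambda_1^{(\alpha)}(G) - \beta\lambda_n^{(\gamma)}(G),
\]
again contradicting maximality; thus $uv\notin E(G)$. I do not foresee a genuine obstacle here: the only thing to be careful about is the direction of the inequality for $\lambda_n^{(\gamma)}$ (which is a minimum, so testing with $\bm{z}$ on a perturbed graph provides an \emph{upper} bound that translates into a \emph{lower} bound on $-\beta\lambda_n^{(\gamma)}$), and the fact that $G+uv$ and $G-uv$ remain admissible competitors, which follows from $G$ being connected and the connectivity hypothesis on $G-uv$ respectively.
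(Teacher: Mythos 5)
Your proposal is correct and is essentially the same argument as the paper's: perturb $G$ by adding (resp.\ deleting) the edge $uv$, use $\bm{x}$ and $\bm{z}$ as test vectors via Proposition \ref{prop:Rayleigh-principle}, and contradict the maximality of $\lambda_1^{(\alpha)}(G)-\beta\lambda_n^{(\gamma)}(G)$. The only cosmetic difference is that you record the edge-sum identity for $\lambda_1^{(\alpha)}-\beta\lambda_n^{(\gamma)}$, which is true but not actually needed for the perturbation step.
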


\begin{proof}
According to \eqref{eq:max-min-Aalpha-1-n}, if $\alpha (x_u - x_v)^2 - \beta\gamma (z_u - z_v)^2 + 2 (x_ux_v - \beta z_uz_v ) > 0$ 
and $u$, $v$ are non-adjacent, then
\begin{align*}
& ~\big(\lambda_1^{(\alpha)} (G+uv) -\beta\cdot\lambda_n^{(\gamma)} (G+uv)\big) 
- \big(\lambda_1^{(\alpha)} (G) -\beta\cdot\lambda_n^{(\gamma)} (G)\big) \\
\geq & ~\alpha (x_u - x_v)^2 - \beta\gamma (z_u - z_v)^2 + 2 (x_ux_v - \beta z_uz_v ) > 0,
\end{align*}
a contradiction which implies that $u$ and $v$ are adjacent. Likewise, we can show that if 
$\alpha (x_u - x_v)^2 - \beta\gamma (z_u - z_v)^2 + 2 (x_ux_v - \beta z_uz_v ) < 0$ and $G-uv$ is 
connected, then $u$ and $v$ are non-adjacent. 
\end{proof}

\subsection{Understanding the structure}

Fix a sufficiently small constant $\varepsilon > 0$, whose value will be chosen later. Let
\[
S:=\Big\{v\in V(G): |z_v| < \frac{\varepsilon}{\sqrt{n}}\Big\}, \hspace{3mm}
T:= \Big\{v\in V(G): x_v < \frac{1}{2\sqrt{n}}\Big\},
\]
and $L:= V(G)\setminus S$. We further partition $L$ into two subsets:
\[
B:= \Big\{v\in L: z_v > 0\Big\}, \hspace{3mm}
C:= \Big\{v\in L: z_v < 0\Big\}.
\]

We first prove that $L$ has bounded size.

\begin{lemma}\label{lem:size-L}
$|L| = o(n)$.
\end{lemma}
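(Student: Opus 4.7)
The plan is to exploit the positive semidefiniteness of $A_{\gamma}(G)$ (Proposition~\ref{prop:Aalpha-properties}(1), which applies since $\gamma\geq 1/2$) together with the near-completeness of $G$ established in Lemma~\ref{lem:size-lower-bound-for-Aalpha}. A naive use of $\|\bm{z}\|_2=1$ only gives $|L|\leq n/\varepsilon^2 = O(n)$; to upgrade this to $o(n)$ one has to combine the edge-sum form of $\lambda_n^{(\gamma)}$ with the fact that $G$ has only $O(n)$ non-edges.

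First, I rewrite the eigenvalue through the quadratic-form identity \eqref{eq:quadratic-form-for-Aalpha-matrix-2}:
\[
\lambda_n^{(\gamma)} = (2\gamma-1)\sum_{u\in V(G)} d(u)\, z_u^2 + (1-\gamma)\sum_{uv\in E(G)} (z_u+z_v)^2.
\]
Since $\gamma\in [1/2,1)$, both summands on the right are nonnegative, while Lemma~\ref{lem:Aalpha-lower-bound} bounds the left-hand side by $2/\beta$. Consequently
\[
\sum_{uv\in E(G)} (z_u+z_v)^2 \;\leq\; \frac{2}{\beta(1-\gamma)},
\]
a constant depending only on $\beta$ and $\gamma$.

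Next, I exploit the sign partition $L=B\cup C$. For any edge $uv$ with both endpoints in $B$ (or both in $C$), the values $z_u,z_v$ share a sign, so $(z_u+z_v)^2\geq z_u^2+z_v^2$. Writing $d_B(u):=|N(u)\cap B|$ and $d_C(u)$ analogously, summing edge by edge gives
\[
\sum_{u\in B} d_B(u)\, z_u^2 + \sum_{u\in C} d_C(u)\, z_u^2 \;\leq\; \sum_{uv\in E(G)} (z_u+z_v)^2 \;=\; O(1).
\]

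To conclude, I argue by contradiction: suppose $|B|\geq \eta n$ for some fixed $\eta>0$. Lemma~\ref{lem:size-lower-bound-for-Aalpha} yields $|E(\bar G)| = O(n)$, so the number of missing edges inside $B$ is also $O(n)$, whence $\sum_{u\in B} d_B(u) \geq |B|(|B|-1) - O(n) \geq \eta^2 n^2 - O(n)$. Since every $u\in L$ satisfies $z_u^2\geq \varepsilon^2/n$,
\[
\sum_{u\in B} d_B(u)\, z_u^2 \;\geq\; \frac{\varepsilon^2}{n}\bigl(\eta^2 n^2 - O(n)\bigr) \;=\; \Omega(n),
\]
contradicting the $O(1)$ bound once $n$ is large. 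The same reasoning applied to $C$ forces $|B|=|C|=o(n)$, and hence $|L|=o(n)$. The only point requiring some care is tracking that all the implicit constants in $O(\cdot)$ depend solely on $\alpha,\beta,\gamma$ (not on $\varepsilon$ or $\eta$), so that the threshold in $n$ where the contradiction kicks in depends only on $\alpha,\beta,\gamma,\varepsilon,\eta$, which is exactly what the statement $|L|=o(n)$ requires.
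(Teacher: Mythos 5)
Your proposal is correct and follows essentially the same route as the paper: both arguments use the quadratic form \eqref{eq:quadratic-form-for-Aalpha-matrix-2} together with the near-completeness of $G$ from Lemma~\ref{lem:size-lower-bound-for-Aalpha} to extract $\Omega(n^2)$ same-sign edges inside $B$ (or $C$), each contributing $\Omega(\varepsilon^2/n)$ to $\lambda_n^{(\gamma)}$, contradicting $\lambda_n^{(\gamma)}=O(1)$. The only difference is cosmetic: you bound $(z_u+z_v)^2\geq z_u^2+z_v^2$ on same-sign edges while the paper uses $(z_u+z_v)^2\geq 2(|z_uz_v|+z_uz_v)$, which amounts to the same estimate.
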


\begin{proof}
We assume by contradiction that $|L| = \Omega (n)$.
To finish the proof we shall prove $\lambda_n^{(\gamma)} = \Omega (n)$, and therefore 
get a contradiction to $\lambda_n^{(\gamma)} < (1 + \beta\gamma)/\beta$. 

To this end, note that $L = B \cup C$. Hence, at least one of $B$ and $C$ has size $\Omega(n)$. 
If $|B| = \Omega (n)$, we have $|E(B)| = \Omega(n^2)$. Otherwise,
if $|E(B)| = o(n^2)$, then 
\[
e(G) < \binom{n}{2} - \bigg(\binom{|B|}{2} - o(n^2)\bigg). 
\]
This is a contradiction to Lemma \ref{lem:size-lower-bound-for-Aalpha}. 
Likewise, if $|C| = \Omega (n)$, we also have $|E(C)| = \Omega(n^2)$.
Hence, $|E(B)\cup E(C)| = \Omega (n^2)$.

Since $\bm{z}$ is a unit eigenvector corresponding to $\lambda_n^{(\gamma)}$, 
by \eqref{eq:quadratic-form-for-Aalpha-matrix-2} we see
\begin{align*}
\lambda_n^{(\gamma)}
& = (2\gamma - 1) \sum_{u\in V(G)} d(u) z_u^2 + (1 - \gamma) \sum_{uv\in E(G)} (z_u + z_v)^2 \\
& \geq (1 - \gamma) \sum_{uv\in E(G)} (z_u + z_v)^2 \\
& = (1 - \gamma) \sum_{uv\in E(G)} (z_u^2 + z_v^2) + 2(1 - \gamma) \sum_{uv\in E(G)} z_u z_v \\
& \geq 2(1 - \gamma) \sum_{uv\in E(G)} (|z_u z_v| + z_u z_v) \\
& = 2(1 - \gamma) \sum_{uv\in E(G),\, z_u z_v>0} (|z_u z_v| + z_u z_v) \\
& \geq 2(1 - \gamma) \sum_{uv\in E(B)\cup E(C)} (|z_u z_v| + z_u z_v).
\end{align*}
Recall that $|z_v| \geq \frac{\varepsilon}{\sqrt{n}}$ for each $v\in B\cup C$, we have 
$\lambda_n^{(\gamma)} = \Omega (n)$, a contradiction completing the proof.
\end{proof}

Applying Lemma \ref{lem:size-L}, we will demonstrate that $G$ necessarily contains a vertex of small degree.

\begin{lemma}\label{lem:at-least-one-vertex-o(n)-for-Aalpha}
The graph $G$ has at least one vertex with degree $o(n)$.
\end{lemma}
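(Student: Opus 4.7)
The plan is to argue by contradiction. Suppose there is a constant $c>0$ such that $\delta(G)\ge cn$; I aim to show this forces $\lambda_n^{(\gamma)}(G)$ to grow at least linearly in $n$, contradicting the uniform upper bound $\lambda_n^{(\gamma)}<2/\beta$ already established in Lemma \ref{lem:Aalpha-lower-bound}.

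The main device I would use is the operator identity
\[
A_\gamma(G) \;=\; A_\gamma(K_n)-A_\gamma(\bar G),
\]
which follows immediately from $A(G)+A(\bar G)=J-I$ and $D(G)+D(\bar G)=(n-1)I$. A direct spectral computation shows $A_\gamma(K_n)=(\gamma n-1)I+(1-\gamma)J$, with eigenvalues $n-1$ on $\mathbf{1}$ and $\gamma n-1$ on $\mathbf{1}^\perp$; hence $\lambda_n(A_\gamma(K_n))=\gamma n-1$. Weyl's inequality then yields
\[
\lambda_n^{(\gamma)}(G)\;\ge\;(\gamma n-1)-\lambda_1\bigl(A_\gamma(\bar G)\bigr).
\]

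Next I would estimate $\lambda_1(A_\gamma(\bar G))$ from above. A second application of Weyl gives $\lambda_1(A_\gamma(\bar G))\le\gamma\Delta(\bar G)+(1-\gamma)\lambda_1(A(\bar G))$. Under the contradiction hypothesis $\Delta(\bar G)=n-1-\delta(G)\le(1-c)n$, and Lemma \ref{lem:size-lower-bound-for-Aalpha} forces $2e(\bar G)=n(n-1)-2e(G)<Cn$ for the constant $C:=(3-\alpha+2\beta)/(1-\alpha)$, so the trace identity $\lambda_1(A(\bar G))^2\le\sum_i\lambda_i(A(\bar G))^2=2e(\bar G)$ yields $\lambda_1(A(\bar G))\le\sqrt{Cn}$. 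Putting everything together,
\[
\lambda_n^{(\gamma)}(G)\;\ge\;\gamma cn-1-(1-\gamma)\sqrt{Cn},
\]
which exceeds $2/\beta$ for all sufficiently large $n$, contradicting Lemma \ref{lem:Aalpha-lower-bound}. This forces $\delta(G)=o(n)$.

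The argument is essentially mechanical once the right decomposition is in hand, so I do not expect a serious obstacle; the one place to be careful is the direction of Weyl, since obtaining a lower bound on $\lambda_n(X-Y)$ requires pairing $\lambda_n(X)$ with $-\lambda_1(Y)$ rather than with $-\lambda_n(Y)$. The essential input from the preceding lemmas is that Lemma \ref{lem:size-lower-bound-for-Aalpha} makes $\bar G$ extremely sparse (only $O(n)$ edges), which is exactly what is needed to render $\lambda_1(A(\bar G))$ negligible against the linear-in-$n$ main term $\gamma n-1$ coming from $K_n$.
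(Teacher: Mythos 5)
Your argument is correct, and it takes a genuinely different route from the paper. The paper argues locally: it picks the vertex $v_0$ with the largest $|z|$-entry (so $|z_{v_0}|\ge n^{-1/2}$), applies the eigenvalue--eigenvector equation for $\lambda_n^{(\gamma)}$ at $v_0$ to get $\sum_{u\in N(v_0)}z_u=\Omega(\sqrt{n})$ from $\gamma d(v_0)=\Omega(n)$, and then contradicts the Cauchy--Schwarz bound $\sqrt{|L|}+\varepsilon\sqrt{n}$ using Lemma \ref{lem:size-L}. You instead argue globally via the decomposition $A_\gamma(G)=A_\gamma(K_n)-A_\gamma(\bar G)$ and Weyl's inequality, using only Lemma \ref{lem:Aalpha-lower-bound} (for $\lambda_n^{(\gamma)}=O(1)$) and Lemma \ref{lem:size-lower-bound-for-Aalpha} (to make $\bar G$ have $O(n)$ edges, hence $\lambda_1(A(\bar G))=O(\sqrt{n})$). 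All the individual steps check out: the identity $A_\gamma(K_n)=(\gamma n-1)I+(1-\gamma)J$, the direction of Weyl for $\lambda_{\min}(X-Y)$, the bound $\Delta(\bar G)\le(1-c)n$ under the contradiction hypothesis $\delta(G)\ge cn$, and the trace bound $\lambda_1(A(\bar G))\le\sqrt{2e(\bar G)}$. Your approach buys independence from the eigenvector structure and from the set $L$ (so Lemma \ref{lem:size-L} and the choice of $\varepsilon$ are not needed here), and it yields an explicit quantitative lower bound $\lambda_n^{(\gamma)}(G)\ge\gamma cn-1-(1-\gamma)\sqrt{Cn}$; both proofs rely essentially on $\gamma>0$ so that the diagonal part supplies the linear main term. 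One cosmetic remark: the negation of ``some vertex has degree $o(n)$'' should formally be handled by passing to a subsequence on which $\delta(G_n)\ge cn$, but this is exactly the convention the paper itself adopts.
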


\begin{proof}
Assume by contradiction that $d(v) = \Omega (n)$ for each $v\in V(G)$.
Let $v_0$ be a vertex such that $|z_{v_0}| = \max\{|z_w|: w\in V(G)\}$. Without loss of generality, we can assume 
$z_{v_0}\leq -n^{-1/2}$. Consider the following eigenvalue\,--\,eigenvector equation with respect to $v_0$,
\[
0 \leq \big( \lambda_n^{(\gamma)} - \gamma d(v_0) \big) z_{v_0} = (1 - \gamma) \sum_{u\in N(v_0)} z_u.
\]
Since $\lambda_n^{(\gamma)} = O(1)$, $d(v_0)=\Omega(n)$ and $z_{v_0}\leq -n^{-1/2}$, we have 
\[
\sum_{u\in N(v_0)} z_u = \frac{ \lambda_n^{(\gamma)} - \gamma d(v_0) }{ 1 - \gamma }\cdot z_{v_0} = \Omega(\sqrt{n}).
\]
On the other hand, by Cauchy--Schwarz inequality and the fact $\|\bm{z}\|_2 = 1$, we find that
\[
\sum_{u\in N(v_0)} z_u 
\leq \sum_{u\in N(v_0)\, \cap L} |z_u| + \sum_{u\in N(v_0) \setminus L} |z_u| 
\leq \sqrt{|L|} + \varepsilon\sqrt{n},
\]
which yields that $|L| = \Omega(n)$ for sufficiently small $\varepsilon$. This is a contradiction
to Lemma \ref{lem:size-L}.
\end{proof}

Next, we will show that $G$ has exactly one vertex of small degree. Before continuing,
we need the following results.

\begin{lemma}\label{lem:x-bound-for-Aalpha}
For each $v\in V(G)$, $x_v < \frac{1}{\sqrt{n}} + O(n^{-3/2})$.
\end{lemma}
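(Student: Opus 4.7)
My plan is to bound $x_v$ directly from the eigenvalue--eigenvector equation \eqref{eq:eigenvalue-vector-equation} applied at the vertex $v$, namely
\[
\big(\lambda_1^{(\alpha)} - \alpha d(v)\big) x_v = (1-\alpha) \sum_{u\in N(v)} x_u.
\]
The right-hand side is easy to control: by Cauchy--Schwarz and $\|\bm{x}\|_2 = 1$,
\[
\sum_{u\in N(v)} x_u \leq \sqrt{d(v)}\cdot\Big(\sum_{u\in N(v)} x_u^2\Big)^{1/2} \leq \sqrt{d(v)} \leq \sqrt{n-1}.
\]
So the task reduces to obtaining a tight lower bound on the coefficient $\lambda_1^{(\alpha)} - \alpha d(v)$.

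For this I would combine Lemma \ref{lem:Aalpha-lower-bound}, which gives $\lambda_1^{(\alpha)} > n - 2 - \beta\gamma$, with the trivial upper bound $d(v) \leq n-1$. This yields
\[
\lambda_1^{(\alpha)} - \alpha d(v) > (n-2-\beta\gamma) - \alpha(n-1) = (1-\alpha) n + (\alpha - 2 - \beta\gamma),
\]
which is positive for large $n$ since $\alpha < 1$ is fixed and the remainder term $\alpha - 2 - \beta\gamma$ is a bounded constant.

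Plugging everything in gives
\[
x_v < \frac{(1-\alpha)\sqrt{n-1}}{(1-\alpha)n + (\alpha - 2 - \beta\gamma)}
= \frac{1}{\sqrt{n}} \cdot \frac{1}{1 + O(1/n)} = \frac{1}{\sqrt{n}} + O(n^{-3/2}),
\]
where the last step uses the expansion $1/(1+t) = 1 + O(t)$ for $t = O(1/n)$. There is no real obstacle here; the estimate is tight precisely because $(1-\alpha)$ cancels between the factor in front of the sum and the leading $(1-\alpha)n$ term in the denominator, and Lemma \ref{lem:Aalpha-lower-bound} already supplies the near-optimal lower bound on $\lambda_1^{(\alpha)}$ required to make this cancellation work.
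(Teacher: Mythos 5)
Your proof is correct and follows essentially the same route as the paper: both apply the eigenvalue--eigenvector equation at $v$, bound $\sum_{u\in N(v)} x_u \le \sqrt{d(v)}$ by Cauchy--Schwarz with $\|\bm{x}\|_2=1$, and lower-bound the coefficient $\lambda_1^{(\alpha)} - \alpha d(v)$ using Lemma~\ref{lem:Aalpha-lower-bound} together with $d(v)\le n-1$, so that the factors of $1-\alpha$ cancel in the leading order. No gaps.
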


\begin{proof}
By the eigenvalue\,--\,eigenvector equation and Cauchy--Schwarz inequality we have
\[
\big( \lambda_1^{(\alpha)} - \alpha d(v) \big) x_v = (1 - \alpha) \sum_{u\in N(v)} x_u \leq (1 - \alpha) \sqrt{d(v)},
\]
which, together with Lemma \ref{lem:Aalpha-lower-bound} and $d(v) \leq n - 1$, gives 
\[
x_v \leq \frac{ (1 - \alpha) \sqrt{d(v)} }{ \lambda_1^{(\alpha)} - \alpha d(v) }
< \frac{ (1 - \alpha) \sqrt{n} }{ (1 - \alpha) n - 4 } = \frac{1}{\sqrt{n}} + O\Big(\frac{1}{n^{3/2}}\Big),
\]
as desired.
\end{proof}

\begin{lemma}\label{lem:T-size-for-Aalpha}
$|T| = O(1)$.
\end{lemma}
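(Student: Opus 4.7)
The plan is to exploit the unit $L^2$-norm of $\bm{x}$ against the pointwise upper bound from Lemma~\ref{lem:x-bound-for-Aalpha}. Since $\|\bm{x}\|_2 = 1$ forces the total squared mass to equal $1$, while each individual entry is at most $\frac{1}{\sqrt{n}} + O(n^{-3/2})$, there is essentially no ``slack'' in the budget: if too many coordinates fell noticeably below the $\frac{1}{\sqrt{n}}$ scale, the remaining coordinates could not make up the deficit even if they were pushed to their maximum allowed value.

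Concretely, I would split the sum $1 = \sum_{v \in V(G)} x_v^2$ into the part over $T$ and the part over $V(G) \setminus T$. For $v \in T$ we have $x_v < \frac{1}{2\sqrt{n}}$ by definition, so $x_v^2 < \frac{1}{4n}$. For $v \notin T$ we apply Lemma~\ref{lem:x-bound-for-Aalpha} to obtain $x_v^2 \leq \frac{1}{n} + O(n^{-2})$. Combining these,
\[
1 = \sum_{v \in T} x_v^2 + \sum_{v \notin T} x_v^2 < \frac{|T|}{4n} + (n - |T|)\Big(\frac{1}{n} + O(n^{-2})\Big) = 1 - \frac{3|T|}{4n} + O(n^{-1}).
\]
Rearranging yields $\frac{3|T|}{4n} < O(n^{-1})$, and hence $|T| = O(1)$.

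This is essentially a one-step averaging argument, so I do not expect any serious obstacle; the only thing to be careful about is to quote Lemma~\ref{lem:x-bound-for-Aalpha} in the correct form so that the $O(n^{-2})$ error term on $(n-|T|)$ vertices aggregates to $O(n^{-1})$, which is indeed absorbed into the conclusion.
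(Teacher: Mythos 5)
Your proposal is correct and follows exactly the same route as the paper: split $\|\bm{x}\|_2^2 = 1$ over $T$ and its complement, bound the $T$-part by $|T|/(4n)$ and the rest via Lemma~\ref{lem:x-bound-for-Aalpha}, and rearrange to get $|T| = O(1)$. The expansion $\bigl(\tfrac{1}{\sqrt{n}} + O(n^{-3/2})\bigr)^2 = \tfrac{1}{n} + O(n^{-2})$ is handled correctly, so nothing further is needed.
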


\begin{proof}
By Lemma \ref{lem:x-bound-for-Aalpha} and $\|\bm{x}\|_2 =1$, we deduce that
\begin{align*}
1 & = \sum_{u\in T} x_u^2 + \sum_{u\in V(G)\setminus T} x_u^2 \\
& < \frac{|T|}{4n} + (n - |T|) \bigg(\frac{1}{\sqrt{n}} + O\Big(\frac{1}{n^{3/2}}\Big)\bigg)^2 \\
& = 1 - \frac{3|T|}{4n} + O\Big(\frac{1}{n}\Big).
\end{align*}
Solving this inequality, we obtain the desired result.
\end{proof}

\begin{lemma}\label{lem:x-v-precise-value-for-Aalpha}
For each $v\in V(G)$ with $d(v) = \Omega (n)$, $x_v = \Theta(n^{-1/2})$.
\end{lemma}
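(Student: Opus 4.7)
The plan is to combine the bound from Lemma \ref{lem:x-bound-for-Aalpha} with a matching lower bound obtained by pointing the eigenvalue\,--\,eigenvector equation at $v$ and using that $T$ is small. The upper half, $x_v=O(n^{-1/2})$, is already supplied by Lemma \ref{lem:x-bound-for-Aalpha}, so all of the content is in showing $x_v=\Omega(n^{-1/2})$.

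First I would rewrite \eqref{eq:eigenvalue-vector-equation} at the vertex $v$ as
\[
\big(\lambda_1^{(\alpha)}-\alpha d(v)\big)x_v = (1-\alpha)\!\!\sum_{u\in N(v)}x_u,
\]
and check that the coefficient on the left is positive and of order $n$. Indeed, by Lemma \ref{lem:Aalpha-lower-bound} and the crude bound $d(v)\le n-1$,
\[
0<(1-\alpha)n-(2+\beta\gamma-\alpha)<\lambda_1^{(\alpha)}-\alpha d(v)\le\lambda_1^{(\alpha)}\le n-1,
\]
so $\lambda_1^{(\alpha)}-\alpha d(v)=\Theta(n)$ (recall $\alpha<1$ is fixed). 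Thus to conclude it suffices to prove $\sum_{u\in N(v)}x_u=\Omega(\sqrt{n})$.

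To establish this lower bound I would use Lemma \ref{lem:T-size-for-Aalpha}. Since $|T|=O(1)$ and $d(v)=\Omega(n)$, the set $N(v)\setminus T$ has cardinality at least $d(v)-|T|=\Omega(n)$. By the very definition of $T$, every $u\in N(v)\setminus T$ satisfies $x_u\ge 1/(2\sqrt{n})$, and since $\bm{x}\ge 0$ we may discard contributions from $N(v)\cap T$ to obtain
\[
\sum_{u\in N(v)}x_u\;\ge\;\sum_{u\in N(v)\setminus T}x_u\;\ge\;\frac{|N(v)\setminus T|}{2\sqrt{n}}\;=\;\Omega(\sqrt{n}).
\]
Plugging this into the rearranged eigen-equation yields
\[
x_v=\frac{1-\alpha}{\lambda_1^{(\alpha)}-\alpha d(v)}\sum_{u\in N(v)}x_u\;\ge\;\frac{1-\alpha}{n-1}\cdot\Omega(\sqrt{n})\;=\;\Omega(n^{-1/2}),
\]
which, combined with Lemma \ref{lem:x-bound-for-Aalpha}, gives $x_v=\Theta(n^{-1/2})$.

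There is no genuine obstacle here; the proof is essentially a one-line computation once the earlier structural facts are in hand. The only thing that must be tracked carefully is that $\lambda_1^{(\alpha)}-\alpha d(v)$ is of order exactly $n$ rather than $o(n)$ or unbounded, and this is ensured by the two-sided estimate $(1-\alpha)n-O(1)<\lambda_1^{(\alpha)}-\alpha d(v)\le n-1$ coming from Lemma \ref{lem:Aalpha-lower-bound} and $d(v)\le n-1$.
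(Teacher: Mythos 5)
Your proof is correct and follows essentially the same route as the paper: apply the eigenvalue--eigenvector equation at $v$, use $|T|=O(1)$ and the nonnegativity of $\bm{x}$ to get $\sum_{u\in N(v)}x_u\ge (d(v)-O(1))/(2\sqrt{n})=\Omega(\sqrt n)$, divide by $\lambda_1^{(\alpha)}-\alpha d(v)=O(n)$, and combine with Lemma \ref{lem:x-bound-for-Aalpha}. Your extra check that the coefficient $\lambda_1^{(\alpha)}-\alpha d(v)$ is positive and of order $\Theta(n)$ is a slightly more careful version of the paper's one-line remark that $\lambda_1^{(\alpha)}<n$, but the argument is the same.
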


\begin{proof}
Let $v$ be a vertex with $d(v) = \Omega (n)$. Using the eigenvalue\,--\,eigenvector 
equation with respect to $v$, we conclude that
\[
( \lambda_1^{(\alpha)} - \alpha d(v) )x_v 
\geq (1-\alpha) \sum_{u\in N(v)\setminus T} x_u \geq (1-\alpha) \cdot \frac{d(v) - O(1)}{2\sqrt{n}} 
= \Omega(\sqrt{n}).
\]
The second inequality follows from Lemma \ref{lem:T-size-for-Aalpha}. On the other hand, 
$\lambda_1^{(\alpha)} < n$. Therefore, $x_v = \Omega(n^{-1/2})$. Combining with 
Lemma \ref{lem:x-bound-for-Aalpha}, we get the desired result.
\end{proof}

\begin{lemma}\label{lem:z-v-precise-value-for-Aalpha}
For each $v\in V(G)$ with $d(v) = \Omega (n)$, $|z_v| = o(n^{-1/2})$.
\end{lemma}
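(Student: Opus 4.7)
The plan is to invoke the eigenvalue–eigenvector equation \eqref{eq:eigenvalue-vector-equation} for $A_{\gamma}(G)$ at the vertex $v$, namely
\[
\big(\lambda_n^{(\gamma)} - \gamma d(v)\big)\, z_v = (1-\gamma) \sum_{u \in N(v)} z_u,
\]
and then bound $|z_v|$ by controlling the two sides. Since $\gamma \ge 1/2$, Proposition \ref{prop:Aalpha-properties}(1) gives $\lambda_n^{(\gamma)} \ge 0$, while Lemma \ref{lem:Aalpha-lower-bound} gives $\lambda_n^{(\gamma)} = O(1)$; combined with the hypothesis $d(v) = \Omega(n)$, the coefficient on the left satisfies $\gamma d(v) - \lambda_n^{(\gamma)} = \Omega(n)$, and in particular is strictly positive for large $n$. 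Thus we may solve for $z_v$ and the task reduces to an upper bound on $\big|\sum_{u \in N(v)} z_u\big|$.

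For that sum I would split $N(v)$ using the partition $V(G) = L \cup S$. On the $L$-side, Cauchy–Schwarz together with $\|\bm{z}\|_2 = 1$ gives
\[
\sum_{u \in N(v) \cap L} |z_u| \;\le\; \sqrt{|N(v) \cap L|}\cdot \sqrt{\sum_{u \in L} z_u^2} \;\le\; \sqrt{|L|},
\]
which is $o(\sqrt{n})$ by Lemma \ref{lem:size-L}. On the $S$-side, the defining property of $S$ yields $|z_u| < \varepsilon/\sqrt{n}$, and with at most $d(v) \le n$ such neighbors,
\[
\sum_{u \in N(v) \setminus L} |z_u| \;<\; \varepsilon \sqrt{n}.
\]
Putting the pieces together,
\[
|z_v| \;\le\; \frac{(1-\gamma)\big(\sqrt{|L|} + \varepsilon \sqrt{n}\big)}{\gamma d(v) - \lambda_n^{(\gamma)}} \;=\; O\!\left(\frac{\sqrt{|L|}}{n}\right) + O\!\left(\frac{\varepsilon}{\sqrt{n}}\right) \;=\; o(n^{-1/2}),
\]
where the first term is $o(n^{-1/2})$ by $|L| = o(n)$ and the second is absorbed because $\varepsilon$ is a sufficiently small constant fixed at the outset of the subsection.

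The only subtle point is the $\varepsilon\sqrt{n}$ contribution from $N(v) \cap S$: literally it is merely $O(n^{-1/2})$, but the authors' convention — a single $\varepsilon$ chosen arbitrarily small, uniform across the whole argument — is exactly what allows the resulting implicit constant to be made as small as needed, so it may be treated as $o(n^{-1/2})$ in the sense employed throughout this section. No machinery beyond \eqref{eq:eigenvalue-vector-equation} and the earlier bounds on $|L|$ and $\lambda_n^{(\gamma)}$ should be required.
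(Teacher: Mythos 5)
Your proposal is correct and follows essentially the same route as the paper: the eigenvalue--eigenvector equation at $v$, the split of $N(v)$ into $L$ and $S$, Cauchy--Schwarz on the $L$-side, and the $\varepsilon\sqrt{n}$ bound on the $S$-side, with $|L|=o(n)$ and $\lambda_n^{(\gamma)}=O(1)$ doing the work. The paper merely phrases it as a contradiction (assuming $|z_v|>c/\sqrt{n}$ and deducing $|L|=\Omega(n)$), and the ``$\varepsilon$ versus $o(\cdot)$'' subtlety you flag is present in the paper's version as well, resolved by the same convention of a uniformly small fixed $\varepsilon$.
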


\begin{proof}
Suppose on the contrary that there exists a vertex $v$ with $d(v) = \Omega (n)$ such that 
$|z_v|> c/\sqrt{n}$ for some $c>0$. By the eigenvalue\,--\,eigenvector equation for 
$\lambda_n^{(\gamma)}$ with respect to $v$, we have 
\begin{align*}
\bigg|\sum_{u\in N(v)} z_u \bigg| 
= \frac{\big|\big( \lambda_n^{(\gamma)} - \gamma d(v) \big) z_v\big|}{1 - \gamma} 
= \Omega(\sqrt{n}).
\end{align*}
On the other hand, we see
\[
\bigg|\sum_{u\in N(v)} z_u \bigg|
\leq \sum_{u\in N(v) \cap L} |z_u| + \sum_{u\in N(v) \setminus L} |z_u| 
\leq \sqrt{|L|} + \varepsilon\sqrt{n},
\]
which yields that $|L| = \Omega(n)$ for sufficiently small $\varepsilon$, 
a contradiction completing the proof.
\end{proof}

We are now prepared to show that $G$ contains exactly one vertex of small degree.

\begin{lemma}\label{lem:unique-vetex-o(n)-for-Aalpha}
The graph $G$ has exactly one vertex with degree $o(n)$.
\end{lemma}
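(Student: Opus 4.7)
The plan is to argue by contradiction: suppose $G$ contains two distinct vertices $u_1, u_2$ with $d(u_1), d(u_2) = o(n)$, and derive a contradiction with Lemma~\ref{lem:Aalpha-lower-bound}.

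First, from the $\lambda_1^{(\alpha)}$-eigenvalue equation applied at any vertex $v$ with $d(v) = o(n)$ together with Cauchy--Schwarz and Lemma~\ref{lem:Aalpha-lower-bound},
\[
\bigl(\lambda_1^{(\alpha)} - \alpha d(v)\bigr) x_v \;=\; (1-\alpha)\sum_{w \in N(v)} x_w \;\leq\; (1-\alpha)\sqrt{d(v)},
\]
giving $x_v = O(\sqrt{d(v)}/n) = o(n^{-1/2})$. Hence every such vertex lies in $T$, so $U := \{v : d(v) = o(n)\}$ satisfies $|U| \leq |T| = O(1)$ by Lemma~\ref{lem:T-size-for-Aalpha}. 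Next, for any distinct $u, v \in W := V(G) \setminus U$, Lemmas~\ref{lem:x-v-precise-value-for-Aalpha} and \ref{lem:z-v-precise-value-for-Aalpha} give $x_u, x_v = \Theta(n^{-1/2})$ and $|z_u|, |z_v| = o(n^{-1/2})$, whence
\[
\alpha(x_u-x_v)^2 - \beta\gamma(z_u-z_v)^2 + 2(x_u x_v - \beta z_u z_v) \;=\; 2 x_u x_v + o(n^{-1}) \;>\; 0,
\]
and Lemma~\ref{lem:adjacent-iff-for-Aalpha} forces $uv \in E(G)$; thus $G[W] = K_{|W|}$.

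The key step is the asymptotic $\lambda_1^{(\alpha)}(G) = n - |U| - 1 + o(1)$. Since $G[W]$ is complete and $|U| = O(1)$, each $v \in W$ has $d_G(v) = |W| - 1 + O(1)$ and $\sum_{u \in N_G(v) \cap U} x_u = o(n^{-1/2})$; substituting into the eigenvalue equation forces $x_v = (1+o(1))/\sqrt{|W|}$ uniformly on $W$, and feeding this back into $\langle A_\alpha(G)\bm x, \bm x\rangle$ via \eqref{eq:quadratic-form-for-Aalpha-matrix} yields the claim. Under $|U| \geq 2$ this gives $\lambda_1^{(\alpha)}(G) \leq n - 3 + o(1)$; combined with Lemma~\ref{lem:Aalpha-lower-bound} it forces $n - 2 - \beta\gamma < n - 3 + o(1)$, i.e., $\beta\gamma > 1 - o(1)$. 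Since $\beta\gamma \leq 1$, this is a contradiction for large $n$ whenever $\beta\gamma < 1$. Similarly, $|U| \geq 3$ forces $\beta\gamma > 2$, which is impossible.

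The main obstacle is the boundary case $|U| = 2$ with $\beta\gamma = 1$, where the leading-order estimates coincide. Here I would sharpen both sides by one order: compute $\lambda_1^{(\alpha)}(G) = n - 3 + K_G/n + o(1/n)$ by carrying one more term in the analysis above, and establish $\lambda_1^{(\alpha)}(Ki_{n,n-1}) - \beta\lambda_n^{(\gamma)}(Ki_{n,n-1}) \geq n - 3 + K_{Ki}/n + o(1/n)$ via a Rayleigh-quotient computation on $Ki_{n,n-1}$ with a trial vector uniform on $K_{n-1}$ plus a small coordinate on the pendant; showing $K_{Ki} > K_G$ then contradicts the maximality of $G$.
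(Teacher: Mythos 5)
Your overall skeleton --- bound $|U|=O(1)$, show $\lambda_1^{(\alpha)}(G)\leq n-|U|-1+o(1)$, and play this off against the lower bound $\lambda_1^{(\alpha)}>n-2-\beta\gamma$ from Lemma~\ref{lem:Aalpha-lower-bound} --- is the same as the paper's, and it correctly disposes of $|U|\geq 3$ and of $|U|=2$ whenever $\beta\gamma<1$ is a fixed constant. The problem is the case you yourself flag, $|U|=2$ with $\beta\gamma=1$: this is not a removable corner (it is exactly the parameter choice $\alpha=0$, $\beta=2$, $\gamma=1/2$ behind the paper's second corollary), and what you offer for it is a sketch, not a proof. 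Worse, the sketch as stated cannot close the gap, because you only propose to expand $\lambda_1^{(\alpha)}(G)$ to order $1/n$ while saying nothing about $\lambda_n^{(\gamma)}(G)$. The quantity being maximized is $\lambda_1^{(\alpha)}(G)-\beta\lambda_n^{(\gamma)}(G)$, so a $1/n$-precision comparison with $Ki_{n,n-1}$ requires controlling $\lambda_n^{(\gamma)}(G)$ to the same precision for an arbitrary graph consisting of $K_{n-2}$ plus two low-degree vertices attached in an unknown way; your error terms for $\lambda_1^{(\alpha)}(G)$ are only $o(1)$ or $o(1/n)$ with unverified uniformity, and nothing pins down $\lambda_n^{(\gamma)}(G)$ at all.

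The paper's resolution of $|M|=2$ goes in a different and much coarser direction, and it is worth internalizing: with two vertices $u,v$ of degree $o(n)$, Lemma~\ref{lem:z-v-precise-value-for-Aalpha} forces almost all of the mass of $\bm z$ onto $\{u,v\}$, so $\max(|z_u|,|z_v|)>1/\sqrt{2}-o(1)$. Feeding this into $\lambda_n^{(\gamma)}\geq(1-\gamma)\sum_{ij\in E(G)}(z_i+z_j)^2$ and doing a short case analysis on whether $d(u)\geq 2$ or $d(u)=1$ yields a \emph{constant} lower bound $\lambda_n^{(\gamma)}>\gamma^2(1-\gamma)/12$. Then
\[
\lambda_1^{(\alpha)}=\bigl(\lambda_1^{(\alpha)}-\beta\lambda_n^{(\gamma)}\bigr)+\beta\lambda_n^{(\gamma)}>n-2-\beta\gamma+\tfrac{\beta\gamma^2(1-\gamma)}{12}\geq n-3+\tfrac{\beta\gamma^2(1-\gamma)}{12},
\]
which contradicts $\lambda_1^{(\alpha)}<n-3+o(1)$ uniformly for all $0<\beta\gamma\leq 1$, boundary case included. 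In other words, the effect you are missing is of constant order, not of order $1/n$: two small-degree vertices force the smallest $A_\gamma$-eigenvalue to jump to a positive constant. Without this (or some substitute of comparable strength), your proof is incomplete precisely where the hypothesis $\beta\gamma\leq 1$ is tight.
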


\begin{proof}
We prove this lemma by contradiction. Let $M$ be the set of vertices whose degrees are $o(n)$. Then
\[
2 e(G) < |M|\cdot o(n) + (n - |M|) n = n^2 - |M| (1-o(1)) n.
\]
Moreover, by Lemma \ref{lem:size-lower-bound-for-Aalpha}, we have $|M| = O(1)$. Assume that $|M|\geq 2$. 
In the following we shall get contradiction by considering the cases $|M|\geq 3$ and 
$|M| = 2$, respectively. Together with Lemma \ref{lem:at-least-one-vertex-o(n)-for-Aalpha} 
this implies that $|M| = 1$.

To finish the proof, set $p(u,v):= \alpha x_u^2 + 2(1-\alpha) x_ux_v + \alpha x_v^2$. By 
Lemma \ref{lem:x-bound-for-Aalpha}, $x_v < (1+o(1)) n^{-1/2}$ for each $v \in V(G)$. Together with \eqref{eq:quadratic-form-for-Aalpha-matrix} 
and Proposition \ref{prop:Rayleigh-principle}, this gives
\begin{align*}
\lambda_1^{(\alpha)} 
& = \sum_{uv\in E(G\setminus M)} p(u,v) + \sum_{uv\in E(M, V(G)\setminus M) \, \cup \, E(M)} p(u,v) \\ 
& < \lambda_1^{(\alpha)}(K_{n-|M|}) + 2(1+o(1)) \bigg( \sum_{u\in M} d(u) \bigg) \cdot \frac{1}{n} \\
& < n-|M|-1 + o(1),
\end{align*}
where the last equality is due to $|M| = O(1)$. If $|M|\geq 3$, then $\lambda_1^{(\alpha)} < n-4+o(1)$, 
a contradiction to Lemma \ref{lem:Aalpha-lower-bound}. 

Now, assume that $|M|=2$. Let $M = \{u, v\}$, and $|z_u| \geq |z_v|$. We shall show that $\lambda_n^{(\gamma)}$ 
is not excessively small, specifically, $\lambda_n^{(\gamma)} > \gamma^2 (1 - \gamma)/12$.
Equation \eqref{eq:quadratic-form-for-Aalpha-matrix-2} implies that
\begin{equation}\label{eq:eq-1}
\begin{split}
\lambda_n^{(\gamma)} 
& = (2\gamma - 1) \sum_{w\in V(G)} d(w) z_w^2 + (1-\gamma) \sum_{ij\in E(G)} (z_i + z_j)^2 \\
& \geq (1-\gamma) \sum_{ij\in E(G)} (z_i + z_j)^2.
\end{split}
\end{equation}
On the other hand, in view of Lemma \ref{lem:z-v-precise-value-for-Aalpha} we find
\[
z_u^2 + z_v^2 = 1 - \sum_{s\in V(G)\setminus\{u,v\}} z_s^2 = 1- o(1),
\]
which yields that 
\begin{equation}\label{eq:lower-bound-zu}
|z_u| > \frac{1}{\sqrt{2}} - o(1).
\end{equation}
We consider the following two cases.

{\bfseries Case 1.} $d(u)\geq 2$. Let $w$ be a neighbour of $u$ other than $v$. 
By Lemma \ref{lem:z-v-precise-value-for-Aalpha}, $|z_w| = o(n^{-1/2})$. Recall that $\frac{1}{2} \leq \gamma < 1 $.
Hence, \eqref{eq:eq-1} and \eqref{eq:lower-bound-zu} together imply  
\[
\lambda_n^{(\gamma)} \geq (1-\gamma) (z_u + z_w)^2 > \frac{\gamma^2 (1 - \gamma)}{12}.
\]

{\bfseries Case 2.} $d(u) = 1$. It is enough to consider the case that $u$
and $v$ are adjacent, otherwise we have $\lambda_n^{(\gamma)} > \gamma^2 (1-\gamma)/12$ 
using similar arguments as Case 1. Now, let $uv\in E(G)$. By the eigenvalue\,--\,eigenvector 
equation for $\lambda_n^{(\gamma)}$ with respect to $u$,
\begin{equation}\label{eq:relation-zu-zv}
(\lambda_n^{(\gamma)} - \gamma) z_u = (1 - \gamma) z_v.
\end{equation}
Obviously, we may assume that $|z_v| > c$ for some small constant $c>0$, otherwise, 
we have $\lambda_n^{(\gamma)} > \gamma - o(1)$ by \eqref{eq:relation-zu-zv}.
Let $w$ be a neighbor of $v$ other than $u$. It follows from Lemma \ref{lem:z-v-precise-value-for-Aalpha} that
\begin{equation}\label{eq:gamma-n-lower-bound}
\lambda_n^{(\gamma)} \geq (1 - \gamma) (z_v + z_w)^2 > \frac{1 - \gamma}{4} z_v^2.
\end{equation}
From \eqref{eq:relation-zu-zv}, we know $z_v = \frac{(\lambda_n^{(\gamma)} - \gamma) z_u}{1 - \gamma}$.
Combining this with \eqref{eq:gamma-n-lower-bound} yields
\[
\lambda_n^{(\gamma)} > \frac{1 - \gamma}{4} z_v^2 > \frac{(\lambda_n^{(\gamma)} - \gamma)^2}{4(1-\gamma)} z_u^2 
> \frac{(\lambda_n^{(\gamma)} - \gamma)^2}{12(1-\gamma)},
\]
where the last inequality follows from \eqref{eq:lower-bound-zu}. The above inequality is equivalent to 
\[
(\lambda_n^{(\gamma)})^2 - (12 - 10\gamma) \lambda_n^{(\gamma)} + \gamma^2 < 0.
\]
Solving this inequality, we obtain 
\begin{align*}
\lambda_n^{(\gamma)} 
& > \frac{\gamma^2}{6 - 5\gamma + \sqrt{(6-5\gamma)^2 - \gamma^2}} \\
& > \frac{\gamma^2}{2(6 - 5\gamma)} \\
& > \frac{\gamma^2 (1-\gamma)}{12}.
\end{align*}
To sum up, we have $\lambda_n^{(\gamma)} > \gamma^2 (1-\gamma)/12$. 

Finally, combining with Lemma \ref{lem:Aalpha-lower-bound}, we find that 
\begin{align*}
\lambda_1^{(\alpha)} 
& = \big( \lambda_1^{(\alpha)} - \beta\cdot\lambda_n^{(\gamma)} \big) +  \beta\cdot\lambda_n^{(\gamma)} \\
& > n - 2 - \beta\gamma +  \frac{\beta\gamma^2 (1-\gamma)}{12} \\
& \geq n - 3 + \frac{\beta\gamma^2 (1-\gamma)}{12}.
\end{align*}
This is a contradiction with $\lambda_1^{(\alpha)} < n - |M| - 1 + o(1) = n-3+o(1)$. 
The proof of the lemma is completed.
\end{proof}

Hereafter, we assume that $w$ is the unique vertex such that $d(w) = o(n)$. 

\subsection{Proof of Theorem \ref{thm:alpha-spread}}

We now combine the results from the previous subsection to prove Theorem \ref{thm:alpha-spread}.

\begin{proof}[Proof of Theorem \ref{thm:alpha-spread}.]
For any $u,v\in V(G)\setminus \{w\}$, it follows from Lemma \ref{lem:x-v-precise-value-for-Aalpha} 
and Lemma \ref{lem:z-v-precise-value-for-Aalpha} that
\[ 
\alpha (x_u - x_v)^2 - \beta\gamma (z_u - z_v)^2 + 2 (x_ux_v - \beta z_uz_v ) > 0.
\] 
Combining with Lemma \ref{lem:adjacent-iff-for-Aalpha}, we deduce that the induced subgraph 
$G[V(G)\setminus\{w\}]$ is a complete graph. To finish the proof, we show that $d(w)=1$. Indeed,
by Lemma \ref{lem:z-v-precise-value-for-Aalpha} again, 
\[ 
z_w^2 = 1 - \sum_{u\in V(G)\setminus \{w\}} z_u^2 = 1 - o(1).
\]
If $d(w) \geq 2$, then deleting one edge between $w$ and $V(G)\setminus \{w\}$ will increase 
$\lambda_1^{(\alpha)} (G) - \beta\cdot \lambda_n^{(\gamma)} (G)$ by Lemma \ref{lem:adjacent-iff-for-Aalpha}, 
a contradiction completing the proof. 
\end{proof}


\begin{thebibliography}{99}

\bibitem{Bondy-Murty2008}
J.A. Bondy, U.S.R Murty, Graph Theory,
Graduate Texts in Mathematics 244. Springer-Verlag, London, 2008.

\bibitem{BRTJ} J. Breen, A.W.N. Riasanovsky, M. Tait, J. Urschel, Maximum spread of graphs and bipartite graphs, 
\emph{Comm. Amer. Math. Soc.} {\bfseries 2} (2022) 417--480.

\bibitem{Das2012} K. Das, Proof of conjectures involving the largest and the smallest signless Laplacian 
eigenvalues of graphs, \emph{Discrete Math.} {\bfseries 312} (2012) 992--998.

\bibitem{Deutsch1978} E. Deutsch, On the spread of matrices and polynomials, 
\emph{Linear Algebra Appl.} {\bfseries 22} (1978) 49--55. 

\bibitem{Fan-Fallat2012} Y. Fan, S. Fallat, Edge bipartiteness and signless Laplacian spread of graphs, 
\emph{Appl. Anal. Discrete Math.} {\bfseries 6} (2012) 31--45.

\bibitem{FXWL} Y. Fan, J. Xu, Y. Wang, D. Liang, The Laplacian spread of a tree, 
\emph{Discrete Math. Theor. Comput. Sci.} {\bfseries 10}\,(1) (2008) 79--86.

\bibitem{Gregory-Hershkowitz-Kirkland2001} D. Gregory, D. Hershkowitz, S. Kirkland, 
The spread of the spectrum of a graph, \emph{Linear Algebra Appl.} {\bfseries 332/334} (2001) 23--35.

\bibitem{Hong-Shu-Fang2001} Y. Hong, J. Shu, K. Fang, 
A sharp upper bound of the spectral radius of graphs, 
\emph{J. Combin. Theory Ser. B} {\bfseries 81} (2001) 177--183.

\bibitem{HuangLinXue2020} X. Huang, H. Lin, J. Xue, 
The Nordhaus--Gaddum type inequalities of $A_{\alpha}$-matrix,
\emph{Appl. Math. Comput.} {\bfseries 365} (2020) 124716.

\bibitem{Johnson1985} C.R. Johnson, R. Kumar, H. Wolkowicz, Lower bounds for the spread of a matrix, 
\emph{Linear Algebra Appl.} {\bfseries 71} (1985) 161--173.

\bibitem{LinMiaoGuo2020-1} Z. Lin, L. Miao, S. Guo, 
Bounds on the $A_{\alpha}$-spread of a graph, 
\emph{Electron. J. Linear Algebra} {\bfseries 36} (2020) 214--227.

\bibitem{LinMiaoGuo2020-2} Z. Lin, L. Miao, S. Guo, 
The $A_{\alpha}$-spread of a graph,
\emph{Linear Algebra Appl.} {\bfseries 606} (2020) 1--22.

\bibitem{Liu2024} L. Liu, Graph limits and spectral extremal problems for graphs, 
\emph{SIAM J. Discrete Math.} {\bfseries 38}\,(1) (2024) 590--608.

\bibitem{Liu-Liu2010} M. Liu, B. Liu, The signless Laplacian spread, 
\emph{Linear Algebra Appl.} {\bfseries 432} (2010) 505--514.

\bibitem{Mirsky1956} L. Mirsky, The spread of a matrix, 
\emph{Mathematika} {\bfseries 3}\,(2) (1956) 127--130.

\bibitem{ML} L. Mirsky, Inequalities for normal and Hermitian matrices, 
\emph{Duke Math. J.} {\bfseries 24} (1957) 591--599.

\bibitem{Nikiforov2017} V. Nikiforov, Merging the $A$- and $Q$-spectral theories,
\emph{Appl. Anal. Discrete Math.} {\bfseries 11} (2017) 81--107.

\bibitem{NT} P. Nylen, T.Y. Tam, On the spread of a Hermitian matrix and a conjecture of Thompson, 
\emph{Linear Multilinear Algebra} {\bfseries 37} (1--3) (1994) 3--11.

\bibitem{Oliveira-Lima-Abreu-Kirkland2010} C. Oliveira, L. de Lima, N. de Abreu, S. Kirkland, 
Bounds on the $Q$-spread of a graph, \emph{Linear Algebra Appl.} {\bfseries 432} (2010) 2342--2351.

\bibitem{TRC} R.C. Thompson, The eigenvalue spreads of a Hermitian matrix and its principal submatrices, 
\emph{Linear Multilinear Algebra} {\bfseries 32} (3--4) (1992) 327--333.

\bibitem{WZS} B. Wang, M. Zhai, J. Shu, On the spectral spread of bicyclic graphs with given girth, 
\emph{Acta Math. Appl. Sin. Engl. Ser.} {\bfseries 29} (3) (2013) 517--528.

\bibitem{WS} Y. Wu, J. Shu, The spread of the unicyclic graphs, 
\emph{European J. Combin.} {\bfseries 31} (1) (2010) 411--418.

\end{thebibliography}
\end{document}